\documentclass[ejsv2, noshowframe]{imsart}
\RequirePackage[numbers, sort]{natbib}
\RequirePackage[colorlinks,citecolor=blue,urlcolor=blue]{hyperref}
\RequirePackage{graphicx, subcaption, comment}

\startlocaldefs
\theoremstyle{plain}

\newtheorem{theorem}{Theorem}[section]

\newtheorem{corollary}[theorem]{Corollary}
\newtheorem{proposition}[theorem]{Proposition}

\theoremstyle{definition}

\newtheorem*{example}{Example}

\theoremstyle{remark}

\newcommand{\norm}[1]{\left\lVert#1\right\rVert}

\newcommand{\e}{\epsilon}
\newcommand{\w}{\omega}

\newcommand{\tr}{\text{tr}}

\newcommand{\R}{\mathbb{R}}

\newcommand{\E}{\mathbb{E}}

\renewcommand{\P}{P}

\newcommand{\X}{\textbf{X}}

\newcommand{\B}{\mathcal{B}}

\newcommand{\C}{\mathcal{C}}

\let\temp\phi
\let\phi\varphi
\let\varphi\temp

\endlocaldefs

\begin{document}

\begin{frontmatter}
\title{Hyper-V uniform ergodicity of Markov chains}
\runtitle{Hyper-V uniform ergodicity of Markov chains}

\begin{aug}
\author[A]{\fnms{Austin}~\snm{Brown}\ead[label=e1]{austinbrown@tamu.edu}}
\and
\author[B]{\fnms{Kshitij}~\snm{Khare}\ead[label=e2]{kdkhare@stat.ufl.edu}}

\address[A]{Department of Statistics, Texas A\&M University, College Station, Texas, USA \printead[presep={,\ }]{e1}}
\address[B]{Department of Statistics, University of Florida, Gainesville, Florida, USA \printead[presep={,\ }]{e2}}
\runauthor{A. Brown and K. Khare}
\end{aug}

\begin{abstract}
We develop a new uniform drift condition and local minorization that implies a stronger weighted form of uniform ergodicity for Markov chains we call hyper-V uniform ergodicity. The convergence guarantees geometric decay of the bias towards the invariant measure independently of the initialization for all functions controlled by a dominating function $V$. A key advantage of the approach is that it bypasses the need to establish a global minorization condition, which is often substantially more difficult to verify in practice, while yielding stronger convergence guarantees than global minorization. 
Optimal convergence bounds in a minimax sense of the framework are established.
The utility of the framework is demonstrated through applications to the P\'olya--Gamma and Kolmogorov--Gamma Gibbs samplers. We also show qualitative hyper-V uniform ergodicity convergence for two-variable Gibbs samplers can be inferred by the form of the invariant measure, bypassing convergence analysis entirely.
\end{abstract}

\begin{keyword}[class=MSC]
\kwdgroup[type=primary]{\kwd{60J22}}
\kwdgroup[type=secondary]{\kwd{65C40}}
\end{keyword}

\begin{keyword}
\kwd{Gibbs samplers}
\kwd{Markov chain Monte Carlo}
\kwd{uniform ergodicity}
\end{keyword}

\end{frontmatter}

\section{Introduction}

%Many Gibbs samplers \citep{geman:geman:1984} experience good performance in practice for modern statistical and AI applications in comparison to other algorithms. 

% We define and investigate a new class of Markov chains we call $V-$regularizing Markov chains.
% These Markov chains hyper-regularize a large class of functions implying a strong Lyapunov drift condition.
% We show that $V-$regularizing Markov chains often yield uniformly ergodicity.
% There are many theoretically advantages to uniformly ergodic Markov chains that provide practical benefits in simulations.
% The first advantage is the geometric convergence rate can be better than using a global minorization over the entire state space as the minorization only has to be taken over a small set which is often compact.
% Another advantage is the Markov chain is robust to the initialization.
% Another advantage is sub-Gaussian concentration through McDiarmid's inequality holds that is not available for geometrically ergodic Markov chains. 

Uniform ergodicity is a particularly strong form of geometric ergodicity that is rarely available in general-state Markov chains, but when it holds, it yields a range of powerful theoretical and practical advantages. In contrast to standard geometric ergodicity, uniform ergodicity provides total variation convergence rates that are independent of the initial state, leading to robustness with respect to initialization and eliminating the need for initialization considerations to determine burn-in. It also yields non-asymptotic guarantees for Markov chain Monte Carlo, with bounds that hold uniformly over the state space. Moreover, uniformly ergodic chains satisfy strong concentration properties: in particular, sub-Gaussian deviation inequalities, such as those derived via McDiarmid's inequality \citep{mcdiarmid:1989, marton:2003} in contrast to the weaker guarantees typically available under general geometric ergodicity.

Despite these advantages, the classical conditions used to establish uniform ergodicity are often restrictive and difficult to verify in practice. The standard approach relies on a global minorization (Doeblin) condition, requiring that the transition kernel $P$ of the Markov chain admits a uniform lower bound of the form 
\[
\inf_{x \in \X} P(x,\cdot) \ge \alpha \nu(\cdot),
\]
for some $\alpha \in (0, 1]$ and probability measure $\nu$ on the state space $\X$. This condition enforces a uniform mixing mechanism across the entire state space, ensuring that the chain loses memory of its initial condition in a single step with fixed probability. Directly verifying a global minorization condition generally requires detailed, uniform control of the transition kernel, which is particularly challenging for Markov chains on unbounded or high-dimensional spaces, or when the transition dynamics vary significantly across the state space \citep{mengersen:tweedie:1996, ChoiHobert2013}. Furthermore, while such conditions yield uniform convergence in total variation, they primarily control expectations of bounded test functions and do not, in general, provide direct guarantees for broader classes of possibly unbounded functions that are often of interest in applications. 

In this work, we replace the classical global minorization condition with a pair of structurally simpler assumptions, consisting of a \emph{uniform drift condition} and a \emph{local minorization condition}. The uniform drift condition (with respect to a function $V: \X \rightarrow [0, \infty)$) takes the form of a one-step uniform Lyapunov bound $\int V(y) P(x, dy) \le K$ for every $x \in \X$, which ensures that the chain reaches a suitably large sublevel set of $V$ with uniformly positive probability from any initial state. The minorization condition is then imposed only on this sublevel set, rather than globally.

These assumptions are typically {\em much easier to verify in practice}, as they require only local control of the transition kernel. Nevertheless, they are sufficient to recover a global mixing mechanism: we show that they imply uniform ergodicity with explicit rates. Moreover, the resulting bounds extend beyond total variation, yielding uniform control of expectations for functions dominated by $V$, and thus providing a \emph{stronger weighted form of uniform ergodicity}, which we term \emph{hyper-$V$ uniform ergodicity} (see Theorem~\ref{thm:main}). The resulting bounds are simple, explicit, and directly interpretable in terms of the drift and minorization parameters. 
While such results can, in principle, be obtained by specializing Theorem~1.3 of Hairer and Mattingly \citep{hairer:mattingly:2011} to the present setting, this typically requires additional technical arguments and the conclusions are not guaranteed to be optimal (see the discussion and comparison following Theorem \ref{thm:main}). In Section~\ref{sec:optimal:rates}, we refine these bounds further by developing optimal hyper-$V$ uniform ergodicity rates under the same assumptions (see Theorem~\ref{thm:convergence_sharp}). 
%In contrast, our approach yields these bounds via a more direct and transparent argument.
%In contrast, Section~\ref{sec:optimal:rates} develops optimal convergence bounds in a minimax sense within the present framework. 

Furthermore, this framework naturally extends to uniform convergence in relevant Wasserstein distances (see Corollary~\ref{cor:wasserstein}). By relating the underlying metric to the Lyapunov function $V$, the weighted-$V$ bounds can be transferred to transport-based distances without imposing additional regularity or contractivity assumptions on the transition kernel. This stands in contrast to existing Wasserstein ergodicity results (\cite{hairer:mattingly:2011, HairerMattinglyScheutzow2011, Eberle2016}), which typically rely on explicit contraction properties, and highlights the robustness of the present approach. 

We demonstrate the efficacy and versatility of this approach through several applications to practically relevant Markov chain Monte Carlo algorithms on general state spaces. First, we consider the Pólya--Gamma Gibbs sampler for Bayesian logistic regression introduced by Polson, Scott, and Windle \cite{PolsonScottWindle2013}, which was shown by Choi and Hobert \cite{ChoiHobert2013} to be uniformly ergodic in total variation via a global minorization argument. Second, we study the Kolmogorov--Gamma sampler for regression with continuous proportion data proposed by Lee and co-authors \cite{LeeEtAlKG}, for which uniform ergodicity in total variation was again established using global minorization techniques. For both samplers, we show that the associated Markov transition kernels satisfy a uniform $V$-drift condition together with a local minorization condition, where $V$ is chosen to control the squared $\ell_2$ norm of the regression coefficients (see Theorems \ref{thm:pg_convergence} and \ref{thm:kg_convergence}). Notably, these conditions can be verified with substantially less effort than establishing the global minorizations of \cite{ChoiHobert2013} and \cite{LeeEtAlKG}, as they require only local control of the transition kernel. More importantly, as a consequence, we obtain a strictly stronger form of convergence, namely \emph{hyper-$V$ uniform ergodicity}, which provides uniform control over a class of unbounded functions. In particular, this leads to uniform convergence to stationarity in the $\ell_2$-Wasserstein distance. To the best of our knowledge, such results have not been previously established for these models. 
Next, we show that there are some special cases where the new framework has limitations, such as in the independence Metropolis-Hastings sampler \cite{hastings:1970, tierney:1994, RobertCasella2004} where a global minorization is necessary but the new uniform drift condition improves existing convergence results.
%Next, we show that our framework also applies to the independence Metropolis-Hastings sampler \cite{hastings:1970, tierney:1994, RobertCasella2004} under suitable regularity conditions, further illustrating the breadth and practical relevance of the proposed methodology. 
Finally, we show qualitative hyper-$V$ uniform convergence can be inferred for two-variable Gibbs samplers if the posterior exhibits a specific form providing a practical tool bypassing convergence analysis. 

The paper is organized as follows.
Section~\ref{section:general_results} develops hyper-$V$ uniform ergodicity based on a uniform drift condition and local minorization condition, yielding explicit convergence bounds in both total variation and weighted-$V$ metrics. Section~\ref{sec:optimal:rates} refines these bounds by deriving optimal convergence rates within the proposed framework and extends the analysis to Wasserstein distances. 
Section~\ref{section:examples} investigates applications to explicit uniform convergence rates for the Polya-Gamma sampler (Section \ref{example:pg_sampler}), the Kolmogorov-Gamma sampler (Section \ref{example:kg_sampler}), and the Independence Metropolis-Hastings sampler (Section \ref{example:imh}). Qualitative convergence results are further developed based on the form of the target measure constructed from data augmentation (Section~\ref{example:general_data_augmentation}). Section~\ref{section:conclusion} discusses the results and future research directions.

\section{General result on hyper-V uniform ergodicity}
\label{section:general_results}

Let $\X$ be a Borel measurable space and $\B(\X)$ its Borel sigma-algebra.
Let $\P(\cdot, \cdot)$ be a Markov transition kernel on $\X \times \B(\X)$.
For Borel functions $f : \X \to \R$ and Borel probability measures $\nu$ on $\X$, define
\begin{align*}
&\P f(\cdot) = \int_{\X} f(y) \P(\cdot, dy), 
&&\nu \P( \cdot ) = \int_{\X} \P(y, \cdot) \nu(dy).
\end{align*}
For any initial Borel probability measure $\mu$ on $\X$, this defines a Markov chain $(X^{(t)})_{t = 0}^{\infty}$ on $\X$ where for integers $t \ge 1$, the conditionals $X^{(t)} \mid X^{(t-1)} = x$ have the distribution $\P(x, \cdot )$ for $x \in \X$.
Define $\P^1 = \P$ and for integers $t \ge 1$, $\P^{t + 1}f(\cdot) = \P \P^t f(\cdot)$.

We will assume there exists a unique invariant probability measure $\Pi$ on $\X$ meaning $\Pi P = \Pi$.
One of the main interests for Markov chains is controlling the bias of $\P^t f(\cdot)$ to $\int_{\X} f d\Pi$ over a class of functions $f : \X \to \R$.
Define the total variation distance by
\[
\norm{\P^t(x, \cdot) - \Pi(\cdot)}_{\mathrm{TV}}
= \sup_{ f : \X \to [0, 1]} \left| \P^tf(x) - \int_{\X} f d\Pi \right|.
\]
A Markov chain is uniformly ergodic if there is a $\rho \in (0, 1)$ and $M \in (0, \infty)$ such that for all $x \in \X$ and for all integers $t \ge 0$
\[
\norm{\P^t(x, \cdot) - \Pi(\cdot)}_{\mathrm{TV}}
\le \rho^t M.
\]
In particular, independent of the initialization, the Markov chain bias of $\P^t f (\cdot)$ geometrically contracts towards $\int_{\X} f d\Pi$ uniformly over any Borel measurable function $f : \X \to [0, 1]$.

In this section, we establish a form of uniform ergodicity that goes beyond classical total variation convergence. The key ingredients are a uniform drift condition (in contrast to the usual geometric drift) together with a local minorization condition, which together induce a two-step global minorization. This mechanism yields uniform exponential convergence in total variation as well as uniform control over expectations of functions growing at rate $V$, giving rise to what we term \emph{hyper-$V$ uniform ergodicity}.
For a Borel function $W : \X \to [0, \infty)$, define a weighted total variation distance by
\[
\norm{\P^t(x, \cdot) - \Pi(\cdot)}_{W}
= \sup_{ |f| \le W} \left| \P^t f(x) - \int_{\X} f d\Pi \right|
= \int_{\X} W(y) | \P^t(x, dy) - \Pi(dy)|
\]
where the supremum is taken over Borel functions $f : \X \to \R$ such that $|f| \le W$.
With $W = 1$, this definition is twice the standard total variation as we have defined it. 
The following shows convergence in this weighted distance uniform in the initialization for the two-step chain.

\begin{theorem}[Hyper-$V$ uniform ergodicity] \label{thm:main}
Let $V : \X \to [0, \infty)$ be a Borel measurable function.
Assume the following hold:
\begin{enumerate}
\item
There exists a constant $K \in (0,\infty)$ such that
\begin{align}
P V(x) \le K, \qquad \forall x \in \X.
\label{eq:drift}
\end{align}

\item
There exists a constant $r > 1$, a probability measure $\nu$ on $\X$, and a constant $\alpha_r \in (0, 1]$ such that
\begin{align}
\inf_{\{x \in \X : V(x) \le rK\}} P(x, \cdot) \;\ge\; \alpha_r \, \nu(\cdot).
\label{eq:local_minorization}
\end{align}
\end{enumerate}
Let $c : [0, \infty) \to [0, \infty)$ be any nondecreasing concave function.
Then for all $x \in \X$ and all integers $t \ge 1$, the following hold:
\begin{align*}
\left\| P^{2t}(x, \cdot) - \Pi(\cdot) \right\|_{\mathrm{TV}}
&\le \big(1 - \alpha_r (1 - 1/r)\big)^t, \\[6pt]
\left\| P^{2t + 1}(x, \cdot) - \Pi(\cdot) \right\|_{c(V)}
%\sup_{f : \X \to \R,\; |f| \le c( V )}
%\left|
%P^{2t+1}f(x) - \int_{\X} f \, d\Pi
%\right|
&\le 2 c(K) \, \big(1 - \alpha_r (1 - 1/r)\big)^t.
\end{align*}
\end{theorem}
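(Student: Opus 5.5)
The plan is to show that the two-step kernel $P^2$ satisfies a global Doeblin condition, and then to pass to the weighted norm using one extra application of $P$ together with the drift bound.

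\textbf{Step 1: a global minorization for $P^2$.} Let $C = \{x : V(x) \le rK\}$. By Markov's inequality and \eqref{eq:drift}, for every $x \in \X$
\[
P(x, C^c) \le \frac{PV(x)}{rK} \le \frac{1}{r}, \qquad\text{so}\qquad P(x, C) \ge 1 - 1/r .
\]
Then, for any Borel set $A$,
\[
P^2(x, A) \ge \int_C P(y, A)\, P(x, dy) \ge \alpha_r \nu(A)\, P(x, C) \ge \alpha_r (1 - 1/r)\, \nu(A),
\]
where the middle inequality uses \eqref{eq:local_minorization}. Hence $P^2$ satisfies a Doeblin condition with constant $\beta = \alpha_r(1-1/r)$, uniformly in $x$.

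\textbf{Step 2: total variation.} Apply the standard coupling (or Dobrushin contraction) argument to $Q = P^2$. Write $Q = \beta \nu + (1-\beta) R$, where $R$ is a Markov kernel. For any two initial laws $\mu, \mu'$, this gives $\|\mu Q - \mu' Q\|_{\mathrm{TV}} \le (1-\beta)\|\mu - \mu'\|_{\mathrm{TV}}$. Take $\mu = \delta_x$ and $\mu' = \Pi$, use $\Pi Q = \Pi$, and note that with the paper's convention (functions valued in $[0,1]$) the initial total variation is at most $1$. Iterating $t$ times yields the first bound, $(1-\beta)^t$.

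\textbf{Step 3: the weighted bound.} Let $|f| \le c(V)$. Since $c$ is concave and nondecreasing, Jensen's inequality and the drift give
\[
P|f|(y) \le Pc(V)(y) \le c(PV(y)) \le c(K) \qquad \text{for all } y .
\]
So $g = Pf$ satisfies $|g| \le c(K)$, and $\Pi$-invariance gives $\int g\, d\Pi = \int f\, d\Pi$. Therefore
\[
P^{2t+1} f(x) - \int f\, d\Pi = P^{2t} g(x) - \int g\, d\Pi .
\]
The function $(g + c(K))/(2c(K))$ takes values in $[0,1]$, so Step 2 bounds the right-hand side in absolute value by $2c(K)(1-\beta)^t$. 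Taking the supremum over $f$ gives the second bound.

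\textbf{Expected obstacle.} The only delicate point is Jensen's inequality for $c$ applied to the probability measure $P(y,\cdot)$. This requires $c(V)$ to be integrable, which holds because concavity gives $c(v) \le c(0) + c'(0^+)\,v$ whenever the right derivative at $0$ is finite. If $c$ has infinite slope at $0$, one can instead bound $c$ by a linear majorant at $K$: concavity gives $c(v) \le c(K) + s\,(v-K)$ for any supergradient $s$ of $c$ at $K > 0$, and integrating against $P(y,\cdot)$ gives $Pc(V) \le c(K) + s(PV - K) \le c(K)$ since $PV \le K$ and $s \ge 0$. This supergradient route avoids the integrability issue entirely, so I would use it in the write-up. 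The remaining work is bookkeeping with the total variation convention. Measurability of $C$ is immediate because $V$ is Borel.
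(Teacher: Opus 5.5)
Your proposal is correct and follows essentially the same route as the paper: Markov's inequality gives the global minorization $P^2(x,\cdot)\ge\alpha_r(1-1/r)\nu(\cdot)$, and the drift with Jensen gives $|Pf|\le c(K)$, which reduces the weighted bound to the bounded case. Your measure-side Dobrushin contraction is the dual of the paper's oscillation contraction $\mathrm{osc}(P^2 f)\le(1-\alpha_r(1-1/r))\,\mathrm{osc}(f)$, and your supergradient argument at $K$ is a slightly more careful justification of the Jensen step than the paper gives.
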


\begin{proof}
By Markov's inequality,
\[
P(x, \{ V > rK \})
\le \frac{PV(x)}{rK}
\le \frac{1}{r},
\qquad \forall x \in \X.
\]

Let $B \subseteq \X$ be Borel and $x \in \X$. Then
\begin{align*}
P^2(x,B)
= \int_{\X} P(x',B)\, P(x,dx') 
&\ge \int_{\{V \le rK\}} P(x',B)\, P(x,dx') \\
&\ge \alpha_r \nu(B)\, P(x,\{V \le rK\}) \\
&\ge \alpha_r \nu(B)\, \big[1 - P(x,\{V > rK\})\big] \\
&\ge \alpha_r (1 - 1/r)\, \nu(B).
\end{align*}
Thus $P^2$ satisfies a global minorization condition, which yields the stated total variation bound. For the second bound, define the oscillation of a bounded measurable function $f : \X \to \R$ by
\[
\text{osc}(f) := \sup_{x,y \in \X} |f(x) - f(y)|.
\]
Define the residual kernel
\[
R^2(x,\cdot)
:=
\frac{P^2(x,\cdot) - \alpha_r (1 - 1/r)\,\nu(\cdot)}
{1 - \alpha_r (1 - 1/r)}.
\]
Then $R^2$ is a Markov kernel, and for any bounded $f$,
\[
\text{osc}(R^2 f) \le \text{osc}(f).
\]
Hence, for all $x,y \in \X$,
\begin{align*}
|P^2 f(x) - P^2 f(y)|
&= \big[1 - \alpha_r (1 - 1/r)\big]\, |R^2 f(x) - R^2 f(y)| \\
&\le \big[1 - \alpha_r (1 - 1/r)\big]\, \text{osc}(f).
\end{align*}
Therefore,
\[
\text{osc}(P^2 f)
\le \big[1 - \alpha_r (1 - 1/r)\big]\, \text{osc}(f).
\]
Iterating,
\[
\text{osc}(P^{2t} f)
\le \big[1 - \alpha_r (1 - 1/r)\big]^t \text{osc}(f).
\]
Now let $\psi : \X \to \R$ with $|\psi| \le c(V)$. 
By Jensen's inequality, for $x \in \X$
\[
\P |\psi|(x) 
\le \int_{\X} c(V(y)) \P(x, dy)
\le c\left( \int_{\X} V(y) \P(x, dy) \right)
\le c( K ).
\]
Then $| P\psi |$ is bounded by the drift condition, and
\[
\sup_{x \in \X} |P\psi(x)| \le c( K ).
\]
Thus,
\begin{align*}
\text{osc}(P^{2t+1}\psi)
\le \big[1 - \alpha_r (1 - 1/r)\big]^t \text{osc}(P\psi) 
&\le \big[1 - \alpha_r (1 - 1/r)\big]^t 2 \sup_{x} |P\psi(x)| \\
&\le 2 c(K) \, \big[1 - \alpha_r (1 - 1/r)\big]^t.
\end{align*}
Finally, for any $x \in \X$,
\begin{align*}
\left| P^{2t+1}\psi(x) - \Pi(\psi) \right|
&= \left| \int_{\X} \big(P^{2t+1}\psi(x) - P^{2t+1}\psi(y)\big)\, \Pi(dy) \right| \\
&\le \int_{\X} \left| P^{2t+1}\psi(x) - P^{2t+1}\psi(y) \right| \Pi(dy) \\
&\le \text{osc}(P^{2t+1}\psi) \\
&\le 2 c(K) \, \big[1 - \alpha_r (1 - 1/r)\big]^t,
\end{align*}
which completes the proof.
\end{proof}

%The uniform drift \eqref{eq:drift} and local minorization \eqref{eq:local_minorization} imply a contraction in total variation and ensures the existence and uniqueness of the invariant measure $\Pi$.

%There are many results showing geometric convergence for Markov chains in total variation \citep{hairer:mattingly:2011, rosenthal:1995}. We compare the convergence of Theorem~\ref{thm:main} to \citep[Theorem 1.3]{hairer:mattingly:2011}. 

\noindent
Theorem~\ref{thm:main} yields intuitive and explicit convergence bounds under a uniform drift condition and local minorization condition. Since similar conclusions can also be obtained from the classical Harris framework of \cite{hairer:mattingly:2011}, it is natural to compare the resulting bounds. 
In particular, we show that Theorem~\ref{thm:main} yields both improved constants for unbounded test functions and, in many practically relevant settings, sharper convergence rates. 
In \citep{hairer:mattingly:2011}, the proof does not directly follow for a uniform drift condition \eqref{eq:drift} and uses classical geometric drift conditions of the form
\[
\P V(x) \le \gamma V(x) + K
\]
with $\gamma \in (0, 1)$.
In definition \eqref{eq:local_minorization}, it is assumed there is a probability measure $\nu$ and for every $r > 1$, a corresponding sublevel set $\{x \in \X : V(x) \le r K\}$ with minorization constant $\alpha_r \in (0, 1]$.
The minorization condition defined in \citep{hairer:mattingly:2011} uses a smaller minorization constant $\alpha_{2r} \le \alpha_{r}$ where the local minorization condition is over a larger sublevel set with $R_r  = 2 r K / (1 - \gamma)$ and
\[
\inf_{\{x \in \X : V(x) \le R_r \}} P(x, \cdot) \;\ge\; \alpha_{2r/(1-\gamma)} \, \nu(\cdot).
\]
%Here we are using a slight abuse of notation with $\alpha_{2r}$ even though $\gamma > 0$.
Define $\beta_r = a / K$ with $a \in (0, \alpha_{2r/(1-\gamma)})$.
Under conditions \eqref{eq:drift} and \eqref{eq:local_minorization}, then \citep[Theorem 1.3]{hairer:mattingly:2011}, gives an explicit convergence rate $\rho_r \in (0, 1)$ such that,
\begin{align*}
\left|
\P^{t + 1} \phi(x) - \int_{\X} \phi d\Pi
\right|
&\le \rho_r^t \left( 2 + \beta_r PV(x) + \beta_r \int_{\X} V d\Pi \right)
\\
&\le \rho_r^t \left( 2 + \beta_r 2K \right)
\end{align*}
holds uniformly for every $x \in \X$ and all $\phi : \X \to \R$ satisfying $| \phi(y) - \phi(x) | \le 2 + \beta_r [ V(x) + V(y) ]$.
The first comparison concerns the constants appearing in the control of unbounded test functions. For functions satisfying $|\phi| \le 1 + V$, the multiplicative constant in the bound of \cite[Theorem~1.3]{hairer:mattingly:2011} is of order at least $1/\beta_r \ge K /  \alpha_{2r/(1-\gamma)}$, which can be extremely large when $\alpha_{2r}$ is small. By contrast, Theorem~\ref{thm:main} yields the uniform constant $2(K+1)$.
%The first advantage is when controlling unbounded functions $|\phi| \le 1 + V$, the constant is of order larger than $1/\beta_r = K / \alpha_{2r}$ and can be extremely large in comparison to a constant of $2 (K + 1)$ from Theorem~\ref{thm:main}.

The second comparison concerns the convergence rate. 
The explicit form of the convergence rate $\rho_{r}$ from \citep[Theorem 1.3]{hairer:mattingly:2011} is
\[
\rho_r 
= (1 - \alpha_{2r} + a) \vee \left\{ \frac{2 + \frac{a R_r \gamma_0}{K} }{2 + \frac{a R_r}{K}} \right\}
\]
where $\gamma_0 = \gamma + 2 K / R_r$.
We can find a explicit form of the optimal convergence rate $\rho^*_{r}$ by minimizing over the parameter $a \in (0, \alpha_{2r/(1-\gamma)})$ achieving its minimium when 
\begin{align*}
&(1 - \alpha_{2r/(1-\gamma)} + a) = \frac{2 + \frac{a R_r \gamma_0}{K} }{2 + \frac{a R_r}{K}}
\\
&f(a) = a^2 + a (1 - \alpha_{2r/(1-\gamma)} - \gamma) - \frac{2 \alpha_{2r/(1-\gamma)} K }{R_r} = 0.
\end{align*}
Since $f(0) < 0$ and $f(\alpha_{2r/(1-\gamma)}) > 0$ and the quadratic equation has only one positive root, this implies the solution satisfies the required constraint by the intermediate value theorem.
Plugging in the solution of $f(a) = 0$ gives the optimal rate in \citep[Theorem 1.3]{hairer:mattingly:2011} with
\begin{align}
\rho^*_r 
&= 1- \alpha_{2r/(1-\gamma)} + \frac{\gamma + \alpha_{2r/(1-\gamma)} - 1 + \sqrt{ (1 - \alpha_{2r/(1-\gamma)} - \gamma)^2 + \frac{8 \alpha_{2r/(1-\gamma)} K }{R_r}  }}{2}
\\
&= \frac{1 - \alpha_{2r/(1-\gamma)} + \gamma + \sqrt{ (1 - \alpha_{2r/(1-\gamma)} - \gamma)^2 + 4 \alpha_{2r/(1-\gamma)} (1 - \gamma) / r  }}{2}.
\label{eq:optimal_HM_rate}
\end{align}
Taking the limit as $\gamma \to 0$, then $\alpha_{2r/(1- \gamma)}$ should behave approximately like $\alpha_{2r}$ and choosing $r = 1.1$, we can understand when
\[
\sqrt{1 - \alpha_{r} (1 - 1/r)} < \rho^*_r.
\]
Figure~\ref{figure:comparison} compares $\alpha_r$ against $\alpha_{2r}$ and shows the boundary condition.
Figure~\ref{figure:comparison} shows that if the minorization constant scales in such a way that $\alpha_{r} \ge 2 \alpha_{2r}$, then the convergence rate in Theorem~\ref{thm:main} is uniformly smaller.
This is satisfied when $\alpha_{r} = C/r^a$ for $C > 0$ and $a \ge 1$ and so Theorem~\ref{thm:main} can provide a sharper estimate to the convergence rate for minorization scaling worse than $1/r$.
This shows that the convergence rate in Theorem~\ref{thm:main} is strictly smaller depending on how the minorization constant scales from $\alpha_{r}$ to $\alpha_{2r}$ with the increasing size of the sublevel set.
This is often expected to hold in statistics applications as the minorization conditions often exhibit exponentially poor scaling.

\begin{figure}
\centering
\includegraphics[width=.8\linewidth]{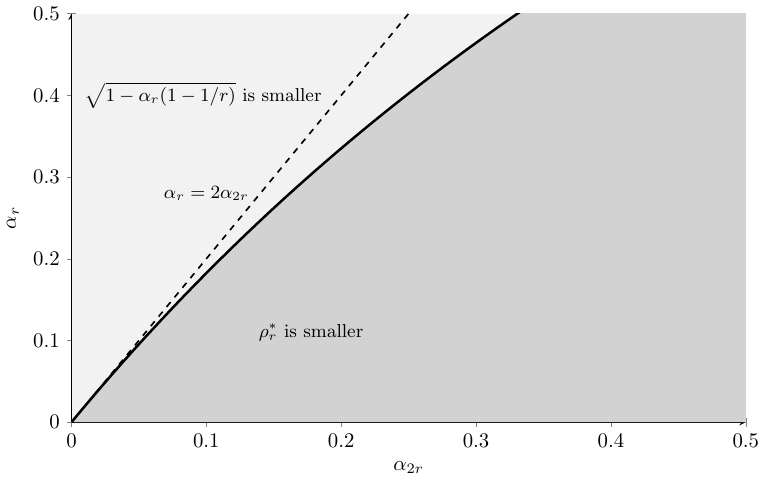}
\caption{
Comparison of convergence rates from Theorem~\ref{thm:main} and the optimal convergence rate $\rho_r^*$ from \citep[Theorem 1.3]{hairer:mattingly:2011}. The parameter $r = 1.1$ is used.
}
\label{figure:comparison}
\end{figure}

\section{Optimal hyper-V uniform convergence rates} \label{sec:optimal:rates}

Theorem~2.1 provides simple and explicit hyper-$V$ uniform ergodicity bounds that clearly exhibit intuition and the role of the drift and minorization parameters. While these bounds are often sufficient in practice, they are not generally optimal. In this section, we derive sharp rates by analyzing the interaction between visits to the minorization region and its complement. The resulting rates are explicit, optimal within the present framework, and can substantially improve upon the bounds of Theorem~2.1.

\begin{theorem}[Optimal hyper-$V$ uniform ergodicity]  \label{thm:convergence_sharp}
Assume the drift condition \eqref{eq:drift} holds with a Borel measurable function $V : \X \to [0, \infty)$ and constant $K \in (0,\infty)$ and the minorization condition \eqref{eq:local_minorization} holds with a constant $\alpha_r \in (0, 1]$ and Borel probability measure $\nu$ on $\X$.
Let $\gamma_r = 1 - 1/r$, and define
\[
\lambda_{\pm}(\gamma_r, \alpha_r)
= \frac{1 - \gamma_r \wedge \alpha_r \pm \sqrt{ (1 - \gamma_r \wedge \alpha_r)^2 + 4 ( \gamma_r \wedge \alpha_r ) (1 - \alpha_r \vee \gamma_r) } }{2}.
\]
Let $c : [0, \infty) \to [0, \infty)$ be any nondecreasing concave function.
Then for all $x \in \X$ and for all integers $t \ge 0$,
\begin{align*}
&\left\| P^{t}(x, \cdot) - \Pi(\cdot) \right\|_{\mathrm{TV}}
\le \rho_t(\gamma_r, \alpha_r)
\\
&\left\| P^{t + 1}(x, \cdot) - \Pi(\cdot) \right\|_{c(V)}
\le 2 c(K) \rho_{t}(\gamma_r, \alpha_r)
\end{align*}
where
\begin{align}
\rho_{t}(\gamma_r, \alpha_r)
= \frac{(1 - \lambda_{-}(\gamma_r, \alpha_r)) \lambda_{+}(\gamma_r, \alpha_r)^t - (1 - \lambda_{+}(\gamma_r, \alpha_r)) \lambda_{-}(\gamma_r, \alpha_r)^t }{ \lambda_{+}(\gamma_r, \alpha_r) - \lambda_{-}(\gamma_r, \alpha_r) }
\label{eq:sharp_rate}
\end{align}
and $\lim_{t \to \infty} \rho_{t}(\gamma_r, \alpha_r)^{1/t} = \lambda_{+}(\gamma_r, \alpha_r)$.
\end{theorem}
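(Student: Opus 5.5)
The plan is to refine the two-step regeneration argument in the proof of Theorem~\ref{thm:main} into a one-step renewal recursion. Write $C = \{V \le rK\}$ and $\gamma_r = 1 - 1/r$. As in that proof, Markov's inequality applied to \eqref{eq:drift} gives $P(x, C) \ge \gamma_r$ for every $x \in \X$.

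The first step is a Nummelin splitting. Define the sub-Markov kernel $Q(x,\cdot) = P(x,\cdot) - \alpha_r 1_C(x)\nu(\cdot)$, which is nonnegative by \eqref{eq:local_minorization}. For the split chain, $Q^t 1(x)$ is the probability, started at $x$, that no regeneration (a $\nu$-draw) has occurred in the first $t$ steps. Iterating $P = Q + \alpha_r 1_C \otimes \nu$ gives the first-entrance decomposition
\[
P^t f(x) = Q^t f(x) + \sum_{s=1}^{t} Q^{s-1}(\alpha_r 1_C)(x)\, \nu P^{t-s} f(x\text{-free}),
\]
where the second term depends on $x$ only through the regeneration-time law. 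I intend to combine this decomposition with a common-regeneration coupling of two copies, or equivalently with the oscillation argument of Theorem~\ref{thm:main}, to reduce both claims to the single estimate
\[
\text{osc}(P^t f) \le \rho_t(\gamma_r, \alpha_r)\, \text{osc}(f).
\]
Given this estimate, the total variation bound follows as in Theorem~\ref{thm:main} by integrating against $\Pi$. The $c(V)$ bound also follows exactly as before: apply the estimate to $P\psi$, using $\sup_x |P\psi(x)| \le c(K)$, which comes from Jensen's inequality and \eqref{eq:drift}.

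The second step is a two-state recursion for the non-regeneration mass. Set $b_t = \sup_x Q^t 1(x)$ and $a_t = \sup_{y \in C} Q^t 1(y)$, so that $a_t \le b_t$ and $b_0 = b_1 = 1$. For $y \in C$ the total mass of $Q(y,\cdot)$ is $1 - \alpha_r$, hence $a_{t+1} \le (1-\alpha_r) b_t$. For general $x$, split $Q^{t+1}1(x) = \int Q(x,dy) Q^t 1(y)$ over $C$ and $C^c$, and use $Q(x, C^c) \le P(x, C^c) \le 1 - \gamma_r$ together with $Q(x,\X) \le 1$. Since $a_t \le b_t$, the worst case puts as much mass as possible on $C^c$. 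This yields a linear recursion of the form
\[
b_{t+1} \le (1-m) b_t + m(1-M) b_{t-1}, \qquad m = \gamma_r \wedge \alpha_r,\quad M = \gamma_r \vee \alpha_r .
\]
To obtain the symmetric min/max form, I will treat the cases $x \in C$ (mass $1-\alpha_r$) and $x \notin C$ (at least $\gamma_r$ of the mass enters $C$) separately, and take the worse of the two linear bounds.

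The characteristic polynomial of this recursion is $\lambda^2 - (1-m)\lambda - m(1-M)$, whose roots are exactly $\lambda_\pm(\gamma_r,\alpha_r)$. With $\rho_0 = \rho_1 = 1$, the solution is \eqref{eq:sharp_rate}; one checks that $\rho_2 = 1 - \gamma_r\alpha_r$, which is consistent with Theorem~\ref{thm:main}. Monotonicity of the recursion, which has nonnegative coefficients, gives $b_t \le \rho_t$ by induction. Since $\lambda_+ > |\lambda_-|$, we also get $\rho_t^{1/t} \to \lambda_+$.

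The main obstacle is the reduction in the first step. Turning the bound on $\sup_x Q^t 1(x)$ into a bound on $\text{osc}(P^t f)$, rather than into something like $Q^t 1(x) + Q^t 1(y)$, needs care, since the regeneration times from $x$ and from $y$ have different laws. I would first try a synchronous coupling in which both copies share the $\nu$-coin whenever they are simultaneously in $C$, tracking the joint state through the same two-term recursion. If that fails, I would instead prove directly that the residual operator controls oscillation, i.e.\ $|P^t f(x) - P^t f(y)| \le \text{osc}(f)\,\sup_z Q^t 1(z)$, by induction on $t$ using the decomposition above.
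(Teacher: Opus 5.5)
Your endgame (Jensen applied to $P\psi$, a $2\times 2$ recursion, and Cayley--Hamilton with $\rho_0=\rho_1=1$) matches the paper. The two steps feeding it, however, have genuine gaps.

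\emph{First gap: the reduction.} The reduction you flag is false under the theorem's hypotheses, not merely delicate. The claimed bound $\mathrm{osc}(P^t f)\le \mathrm{osc}(f)\sup_z Q^t1(z)$ fails because a single-chain non-regeneration probability says nothing about two copies regenerating at the same time. Take $\X=\{c,d\}$ with $P(c,\cdot)=\tfrac{1}{10}\delta_c+\tfrac{9}{10}\delta_d$, $P(d,\cdot)=\delta_c$, $V(c)=0$ and $V(d)=1$. Then $K=9/10$. With $r=11/10$ you get $C_r=\{c\}$ and $\gamma_r=1/11$, and the minorization holds with $\alpha_r=9/10$, $\nu=\delta_d$. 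Your kernel is $Q(c,\cdot)=\tfrac{1}{10}\delta_c$, $Q(d,\cdot)=\delta_c$, so $\sup_z Q^t1(z)=10^{-(t-1)}$. On the other hand, $\norm{P^t(c,\cdot)-P^t(d,\cdot)}_{\mathrm{TV}}=(9/10)^t$, since the second eigenvalue of $P$ is $-9/10$. Already at $t=2$ this is $0.81$ against $0.1$: both copies regenerate quickly, but out of phase.

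What is missing is a pairwise quantity together with a device that sends both copies into $C_r$ simultaneously. The paper uses $d_t(A)=\sup_{x,y\in A}\norm{P^t(x,\cdot)-P^t(y,\cdot)}_{\mathrm{TV}}$ and the split $P(x,\cdot)=\gamma_r\mu_{x,C_r}+(1-\gamma_r)R_x$, where $\mu_{x,C_r}$ is $P(x,\cdot)$ conditioned on $C_r$; this split is legitimate because $P(x,C_r)\ge\gamma_r$. Since the weight $\gamma_r$ does not depend on $x$, it acts as a shared coin. Your fallback coupling would need this coin in addition to the shared $\nu$-coin. The split yields $d_{t+1}(\X)\le(1-\gamma_r)d_t(\X)+\gamma_r d_t(C_r)$.

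\emph{Second gap: the scalar recursion.} Even with a correct reduction, your recursion does not produce $\rho_t(\gamma_r,\alpha_r)$ when $\gamma_r>\alpha_r$, which is the regime of interest (small $\alpha_r$). Your bound for $x\notin C$, namely $(1-\gamma_r)b_t+\gamma_r(1-\alpha_r)b_{t-1}$, is always the worse of your two cases. Its characteristic polynomial $\lambda^2-(1-\gamma_r)\lambda-\gamma_r(1-\alpha_r)$ differs from the min/max one by $(\gamma_r-\alpha_r)(\lambda-1)$, so its top root is strictly larger than $\lambda_+$. Concretely, at $t=3$ your bound already exceeds $\rho_3$ by $\gamma_r\alpha_r(\gamma_r-\alpha_r)$.

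The paper avoids this in two ways.
\begin{enumerate}
\item It keeps the sharper bound on $C_r$. For $x\in C_r$ the residual $S_x=(P(x,\cdot)-\alpha_r\nu)/(1-\alpha_r)$ still satisfies $(1-\alpha_r)S_x(C_r)\ge\beta_r=(\gamma_r-\alpha_r)_+$. This gives $d_{t+1}(C_r)\le\beta_r d_t(C_r)+(1-\alpha_r-\beta_r)d_t(\X)$, information that your bound $a_{t+1}\le(1-\alpha_r)b_t$ discards.
\item It iterates the system as a vector inequality with the nonnegative matrix $M_r$, and applies Cayley--Hamilton to the dominating sequence $(1,0)M_r^t(1,1)^\top$ rather than to the quantity being bounded.
\end{enumerate}
A second-order scalar inequality for $b_t$ itself cannot be extracted from the two upper-bound inequalities. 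Eliminating $a_{t-1}$ would require a lower bound on $b_t$, and only upper bounds are available.
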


\begin{proof}
It suffices to complete the proof assuming $\alpha_r \in (0, 1)$.
For positive integers $t$ and Borel sets $A \subseteq \X$, define $d_t(A) = \sup_{x, y \in A} \norm{\P^t(x, \cdot) - \P^t(y, \cdot)}_{\text{TV}}$.
Define the set $C_r = \{ x \in \X : V(x) \le  r K \}$. 
By Markov's inequality,
\[
P(x, C_r^c)
\le \frac{PV(x)}{r K}
\le \frac{K}{r K}
= 1- \gamma_r,
\qquad \forall x \in \X.
\]
This implies that 
\[
P(x, C_r) \ge \gamma_r.
\]
Let $x \in \X$ and write $\mu_x = \P(x, \cdot)$.
Then we can define the conditional measure $\mu_{x, C_r} = \P(x, \cdot \cap C_r) / \P(x, C_r)$.
For every Borel $B \subseteq \X$,
\[
\mu_x(B) - \gamma \mu_{x, C_r}(B)
= \mu_{x, C_r}(B) \mu_x(C_r) + \mu_x(B \cap C_r^c) - \gamma \mu_{x, C_r}
\ge 0.
\]
The residual probability measure is well defined where
\[
R_x = \frac{\mu_x - \gamma \mu_{x, C_r}}{1 - \gamma_r}
\]
and $\P(x, \cdot) = \gamma_r \mu_{x, C_r} + (1 - \gamma_r) R_x$.
For $x, y \in \X$, we have the simple identity
\[
\P^{t + 1}( x, \cdot ) - \P^{t + 1}( y, \cdot )
= \mu_{x} \P^{t} - \mu_{y} \P^{t}.
\]
Using the residual measure, we then have
\begin{align*}
\P^{t + 1}( x, \cdot ) - \P^{t + 1}( y, \cdot )
= \gamma_r ( \mu_{x, C_r} \P^{t} - \mu_{y, C_r} \P^{t} ) + ( 1 - \gamma_r ) ( R_x \P^{t} - R_y \P^{t} ).
\end{align*}
Taking the supremum over $x, y$, we conclude that
\[
d_{t + 1}(\X)
\le \gamma_r d_{t}(C_r) + ( 1 - \gamma_r ) d_{t}(\X).
\]
Using the minorization condition \eqref{eq:local_minorization}, for $x \in C_r$, we can then define a residual probability measure
\[
S_x(\cdot) = \frac{ \P(x, \cdot) - \alpha_r \nu(\cdot) }{ 1 - \alpha_r }.
\]
Then for all $x, y \in C_r$,
\begin{align*}
\left| \P^{t + 1}( x, \cdot ) - \P^{t + 1}( y, \cdot ) \right|
&\le ( 1 - \alpha_r ) \left| S_x \P^{t} - S_y \P^{t} \right|.
\end{align*}
Using the drift condition \eqref{eq:drift},
\[
S_x(C_r^c)
\le \frac{\P V(x)}{ r K  (1 - \alpha_r)}
\le \frac{1}{ r  (1 - \alpha_r)}
\le \frac{1 - \gamma_r}{1 - \alpha_r}.
\]
So 
\[
S_x(C_r)
\ge \frac{(\gamma_r - \alpha_r)_+}{1 - \alpha_r}.
\]
Let $\beta_r = (\gamma_r - \alpha_r)_+$ denoting the positive part.
Applying the previous argument used with $\mu_x$, then for all $x, y \in C_r$,
\begin{align*}
( 1 - \alpha_r ) \left| S_x \P^{t} - S_y \P^{t} \right|
\le ( 1 - \alpha_r ) \left( \frac{\beta_r}{1 - \alpha_r} d_{t}(C_r) + \left( 1 - \frac{\beta_r}{1 - \alpha_r} \right) d_{t}(\X) \right).
\end{align*}
Therefore,
\begin{align*}
d_{t+1}(C_r) &\le \beta_r d_{t}(C_r) + \left( 1 - \alpha_r - \beta_r \right) d_{t}(\X).
\end{align*}
Define the matrix 
\[
M_r = \begin{pmatrix}
1 - \gamma_r & \gamma_r \\
1 - \alpha_r - \beta_r  & \beta_r
\end{pmatrix}.
\]
Then with the inequality understood coordinate-wise:
\begin{align*}
\begin{pmatrix}
d_{t+1}(\X) \\
d_{t+1}(C_r)
\end{pmatrix}
&\le M_r \begin{pmatrix}
d_{t}(\X) \\
d_{t}(C_r)
\end{pmatrix}
\end{align*}
Since $M_r$ has nonnegaitve entries, then we apply this recursively to get
\begin{align*}
\begin{pmatrix}
d_{t}(\X) \\
d_{t}(C_r)
\end{pmatrix}
\le M_r^{t} \begin{pmatrix}
d_{0}(\X) \\
d_{0}(C_r)
\end{pmatrix}.
\end{align*}
Define $\rho_{r, t} = (1, 0) M_r^t (1, 1)^T$.
Then using matrix multiplication $d_{t}(\X) \le \rho_{r, t}$.
We have then shown that for every $x, y \in \X$,
\[
\norm{\P^t(x, \cdot) - \P^t(y, \cdot)}_{\text{TV}}
\le \rho_{r, t}.
\]
Therefore, integrating with respect to $\Pi$ concludes
\[
\sup_{x \in \X} \norm{\P^t(x, \cdot) - \Pi(\cdot)}_{\text{TV}}
\le \rho_{r, t}.
\]

We now extend this to uniform convergence in the weighted $V$-norm. 
Define the oscillation of a bounded measurable function $f : \X \to \R$ by
\[
\text{osc}(f) := \sup_{x,y \in \X} |f(x) - f(y)|.
\]
Now let $\psi : \X \to \R$ with $|\psi| \le c(V)$. 
By Jensen's inequality, for $x \in \X$
\[
\P |\psi|(x) 
\le \int_{\X} c(V(y)) \P(x, dy)
\le c\left( \int_{\X} V(y) \P(x, dy) \right)
\le c( K ).
\]
Then $| P\psi |$ is bounded by the drift condition, and
\[
\sup_{x \in \X} |P\psi(x)| \le c( K ).
\]
Thus,
\begin{align*}
\text{osc}(P^{t+1}\psi)
\le \rho_{r, t}  2 \sup_{x} |P\psi(x)|
\le \rho_{r, t}  2 c(K).
\end{align*}
Therefore, integrating with respect to $\Pi$
\[
\sup_{x \in \X} \norm{\P^{t + 1}(x, \cdot) - \Pi(\cdot)}_{c(V)}
\le \rho_{r, t} 2 c(K).
\]

It remains to show an explicit form of $\rho_{r, t}$ that is represented by formula \eqref{eq:sharp_rate}.
First assume $\gamma_r > \alpha_r$.
The characteristic polynomial of $M_r$ is defined by
\[
p(\lambda)
= \lambda^2 - \tr(M_r) \lambda + \det(M_r)
=  \lambda^2 - (1 - \alpha_r) \lambda - \alpha_r (1 - \gamma_r).
\]
Next, assume $\gamma_r \le \alpha_r$.
The characteristic polynomial in this case is
\[
p(\lambda)
= \lambda^2 - \tr(M_r) \lambda + \det(M_r)
= \lambda^2 - (1 - \gamma_r) \lambda - \gamma_r (1 - \alpha_r).
\]
Therefore, it suffices to complete the proof assuming $\gamma_r \le \alpha_r$. 
In this case, the characteristic polynomial has roots
\[
\lambda_{\pm} = \frac{1 - \gamma_r \pm \sqrt{ (1 - \gamma_r)^2 + 4 \gamma_r (1 - \alpha_r) } }{2}.
\]
Using the Cayley-Hamilton theorem
\[
M_R^2 = (1 - \gamma_r) M_r + \gamma_r (1 - \alpha_r) I
\]
where $I$ is the identity matrix.
So then multiplying by the row vector $(1, 0)$,
\begin{align*}
&M_r^{t + 2} = (1 - \gamma_r) M_r^{t + 1} + \gamma_r (1 - \alpha_r) M_r^t
\\
&\rho_{r, t + 2} = (1 - \gamma_r) \rho_{r, t + 1} + \gamma_r (1 - \alpha_r) \rho_{r, t}.
\end{align*}
Now, similarly both roots satisfy this relation and for $a \in [0, \infty)$
\[
a \lambda_+^t + (1 - a) \lambda_-^t
\]
satisfies the relation as well.
We can solve directly that $\rho_{r, 0} = \rho_{r, 1} = 1$.
We must solve $a$ to satisfy the initial conditions and then $\rho_{r, t} = a \lambda_+^t + (1 - a) \lambda_-^t$ for all integers $t \ge 0$.
Since $a + 1 - a = 1$, this is true if
$
a \lambda_+ + (1 - a) \lambda_- = 1.
$
This is solved by 
\[
a = \frac{1 - \lambda_-}{ \lambda_+ - \lambda_- }.
\]
Therefore,
\[
\rho_{r, t}
= \frac{(1 - \lambda_-) \lambda_+^t + (\lambda_+ - 1) \lambda_-^t }{ \lambda_+ - \lambda_- }.
\]
Since $\lambda_+ > -\lambda_-$ using Vieta's formula, $(\lambda_- / \lambda_+)^t \to 0$ as $t \to \infty$, then $\lim_{t \to \infty} \rho_{r, t}^{1/t} = \lambda_+$.
\end{proof}

The technique of analyzing the convergence of Markov chains by lifting to a two-dimensional space has been used previously in \citep{canizo:mischler:2023} and \citep{hairer:mattingly:2011}.
The main advantage with this new convergence result is the sublevel set for the minorization condition is always smaller.
We show now the upper bound and convergence rate in Theorem~\ref{thm:convergence_sharp} are optimal in a minimax sense over all kernels satisfying the uniform drift \eqref{eq:drift} and minorization conditions \eqref{eq:local_minorization}.
For $R, K \in (0, \infty)$ with $R > K$ and $\alpha \in (0, 1]$, let $\mathcal{F}(R, \alpha, K)$ be the set of $(\X, \P)$ of all state spaces $\X$ with Borel Markov kernels $\P$ on $\X$ with invariant measure $\Pi_\P$ satisfying
\begin{align*}
&\sup_{x \in \X} P V(x) \le K,
&\inf_{\{x \in \X : V(x) \le R \}} P(x, \cdot) \;\ge\; \alpha \, \nu(\cdot)
\end{align*}
for some Borel function $V : \X \to [0, \infty)$ and some Borel probability measures $\nu$ on $\X$.
We are interested in finding the optimal upper bound at each integer $t \ge 0$, the smallest upper bound $u \ge 0$ such that
\begin{align}
\sup_{\P \in \mathcal{F}(R, \alpha, K)} \sup_{x \in \X} \norm{ \P^t(x, \cdot) - \Pi_{\P} }_{\text{TV}} \le u.
\label{eq:optimal_ub}
\end{align}

\begin{proposition}
Let $R, K \in (0, \infty)$ with $R > K$ and $\alpha \in (0, 1]$.
For all integers $t \ge 0$,
\[
\sup_{\P \in \mathcal{F}(R, \alpha, K)} 
\sup_{x \in \X} \norm{ \P^t(x, \cdot) - \Pi_{\P} }_{\text{TV}}
= \rho_t(1 - K/R, \alpha)
\]
and $\rho_t(1 - K/R, \alpha)$ is the optimal upper bound in \eqref{eq:optimal_ub}.
The convergence rate $\lambda_{+}(1 - K/R, \alpha)$ is optimal in the sense that
\[
\sup_{\P \in \mathcal{F}(R, \alpha, K)} 
\lim_{t \to \infty} \left( \sup_{x \in \X} \norm{ \P^t(x, \cdot) - \Pi_{\P} }_{\text{TV}} \right)^{1/t}
= \lambda_{+}(1 - K/R, \alpha).
\]
\end{proposition}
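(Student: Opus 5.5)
The upper bound comes from Theorem~\ref{thm:convergence_sharp}. The matching lower bound comes from an explicit extremal chain whose non-coupling probability evolves exactly by the matrix $M_r$ from that proof.

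\textbf{Upper bound.} Any $(\X,\P)\in\mathcal{F}(R,\alpha,K)$ satisfies \eqref{eq:drift} with constant $K$, and \eqref{eq:local_minorization} with $r=R/K>1$ and $\alpha_r=\alpha$. Theorem~\ref{thm:convergence_sharp} then gives $\sup_x\norm{\P^t(x,\cdot)-\Pi_\P}_{\mathrm{TV}}\le\rho_t(1-K/R,\alpha)$ for every $t$. Since $\rho_t^{1/t}\to\lambda_+(1-K/R,\alpha)$, this also bounds the limiting rate from above. The rest of the proof is therefore the reverse inequality.

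\textbf{Extremal construction.} For small $\epsilon>0$ set $\gamma_\epsilon=1-K/(R+\epsilon)$ and $\beta_\epsilon=(\gamma_\epsilon-\alpha)_+$.
\begin{itemize}
\item \emph{State space.} Take $\X=\{s\}\cup\{I_n,O_n:n\ge0\}$, where the index $n$ makes the transient states fresh at every step. Put $V(s)=V(I_n)=0$ and $V(O_n)=R+\epsilon$, so $\{V\le R\}=\{s\}\cup\{I_n\}$.
\item \emph{Absorbing state.} Let $P(s,\cdot)=\delta_s$ and $\nu=\delta_s$.
\item \emph{From $O_n$.} Move to $I_{n+1}$ with probability $\gamma_\epsilon$ and to $O_{n+1}$ with probability $1-\gamma_\epsilon$. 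Then $PV(O_n)=(1-\gamma_\epsilon)(R+\epsilon)=K$.
\item \emph{From $I_n$.} Move to $s$ with probability $\alpha$, to $I_{n+1}$ with probability $\beta_\epsilon$, and to $O_{n+1}$ with probability $1-\alpha-\beta_\epsilon$. Since $1-\alpha-\beta_\epsilon\le 1-\gamma_\epsilon$, we get $PV(I_n)\le K$.
\end{itemize}
The minorization $P(x,\cdot)\ge\alpha\delta_s$ holds on $\{V\le R\}$. All other states are transient (indices strictly increase), so $\Pi_\P=\delta_s$ is the unique invariant measure, and the chain lies in $\mathcal{F}(R,\alpha,K)$.

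\textbf{Computing the distance.} Because $\Pi_\P=\delta_s$, we have $\norm{\P^t(O_0,\cdot)-\delta_s}_{\mathrm{TV}}=P_{O_0}(X^{(t)}\neq s)$. Track the vector of probabilities of being at an $O$-state and at an $I$-state at time $t$. This vector evolves by left multiplication by exactly the matrix $M_r$ of the proof of Theorem~\ref{thm:convergence_sharp}, with $\gamma_r$ replaced by $\gamma_\epsilon$. Hence the non-absorption probability is $(1,0)M^t(1,1)^T=\rho_t(\gamma_\epsilon,\alpha)$, using the closed form \eqref{eq:sharp_rate} derived there in both cases $\gamma\le\alpha$ and $\gamma>\alpha$.

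\textbf{Passing to the limit.} The main obstacle is that $V(O_n)$ must exceed $R$ strictly, so $O_n$ lies outside the minorization set. This forces $\gamma_\epsilon<1-K/R$, so the bound is attained only in the limit. Since $\rho_t(\gamma,\alpha)=(1,0)M^t(1,1)^T$ is a polynomial in the entries of $M$, and $\beta_\epsilon=(\gamma_\epsilon-\alpha)_+$ depends continuously on $\gamma_\epsilon$, $\rho_t$ is continuous in $\gamma$. Letting $\epsilon\to0$ gives supremum at least $\rho_t(1-K/R,\alpha)$, which matches the upper bound.

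For the rate, the constructed chain has $\lim_t\rho_t(\gamma_\epsilon,\alpha)^{1/t}=\lambda_+(\gamma_\epsilon,\alpha)$. The explicit formula for $\lambda_+$ is continuous in $\gamma$, so $\lambda_+(\gamma_\epsilon,\alpha)\to\lambda_+(1-K/R,\alpha)$. The supremum of the limits is therefore at least $\lambda_+(1-K/R,\alpha)$, and the upper bound already gives at most this value. Optimality of $\rho_t(1-K/R,\alpha)$ in \eqref{eq:optimal_ub} follows because it equals the supremum.
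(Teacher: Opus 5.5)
Your proposal is correct and follows the paper's proof. The upper bound comes from Theorem~\ref{thm:convergence_sharp} with $r=R/K$. The lower bound uses the same extremal absorbing chain (moves with probabilities $\gamma_\epsilon$, $(\gamma_\epsilon-\alpha)_+$, $\alpha$, and $V=R+\epsilon$ off the minorization set), whose non-absorption probability is $(1,0)M_\epsilon^t(1,1)^T$, followed by letting $\epsilon\downarrow 0$. Your countable fresh-index state space is only a cosmetic variant of the paper's five-state space $\{\pm1,\pm2,3\}$, where the non-absorbing states are already transient and $\delta_3$ is the unique invariant measure.
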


\begin{proof}
Theorem~\ref{thm:convergence_sharp} implies the upper bounds and it will suffice to show the lower bounds.
Consider the state space $\{ 1, -1, 2, -2, 3 \}$.
Let $\e \in (0, 1)$ and define $V(1) = V(-1) = R + \e$ and $V(2) = V(-2) = V(3) = 0$.
Then the sublevel set $\{ V \le R \} = \{ 2, -2, 3 \}$.
Define $\gamma_\e = 1 - K / (R + \e)$.
We define a Markov kernel with $\sigma \in \{-1, 1\}$ by
\begin{align*}
&\P_\e(\sigma 1, \cdot) = (1 - \gamma_\e) \delta_{\sigma 1} + \gamma_\e \delta_{\sigma 2}
\\
&\P_\e(\sigma 2, \cdot) = \alpha \delta_{3} + ( 1 - \alpha - (\gamma_\e - \alpha)_+ ) \delta_{\sigma 1} + (\gamma_\e - \alpha)_+ \delta_{\sigma 2}
\\
&\P_\e(3, \cdot) = \delta_{3}.
\end{align*}
By construction, we can see $\delta_{3}$ is invariant.
Then $\P_\e V(3) = 0$ and
\begin{align*}
&\P_\e V(\sigma 1) = (1 - \gamma_\e) (R + \e) = K
\\
&\P_\e V(\sigma 2) = ( 1 - \alpha - (\gamma_\e - \alpha)_+ ) (R + \e)
\le K.
\end{align*}
We also have that
$
\inf_{ \{ x \in \X : V(x) \le R \} } \P_\e(x, \cdot)
\ge \alpha \delta_3.
$
Therefore, the drift and minorization conditions hold.
So the dynamics moving between states $\{+ 1, + 2 \}$ without entering state $\{ 3 \}$ are described by the matrix
\[
M_e
=
\begin{pmatrix}
1 - \gamma_\e & \gamma_\e \\
1 - \alpha - (\gamma_\e - \alpha)_+ & (\gamma_\e - \alpha)_+
\end{pmatrix}.
\]
So then we can lower bound the total variation using the fact that
\[
\P_{\e}^t(+1, \{+1, +2 \})
\ge (1, 0) M_{\e}^t (1, 1)^T.
\]
and $\delta_{3}(\{+1, +2\}) = 0$.
We then have
\[
\norm{ \P_\e^t(+1, \cdot) - \delta_{3} }_{\text{TV}}
\ge (1, 0) M_{\e}^t (1, 1)^T.
\]
Using continuity and taking the limit 
\[
\liminf_{\e \downarrow 0} (1, 0) M_{\e}^t (1, 1)^T 
\ge (1, 0) M_{R}^t (1, 1)^T 
\ge \rho_t(\gamma_R, \alpha).
\]
So then
\begin{align*}
\sup_{\mathcal{F}(R, \alpha, K)} 
\liminf_{t \to \infty} \left( \sup_{x \in \X} \norm{ \P^t(x, \cdot) - \Pi  }_{\text{TV}} \right)^{1/t}
&\ge \liminf_{t \to \infty} \left( \norm{ \P_\e^t(+1, \cdot) - \Pi  }_{\text{TV}} \right)^{1/t}.
\end{align*}
The proof is completed taking the limit $\e \downarrow 0$.
\end{proof}

These results conclude the convergence bound in Theorem~\ref{thm:convergence_sharp} is optimal.
In particular, the upper bound constant can be much larger with poor scaling in previous results \citep{hairer:mattingly:2011}.
Since the minorization is over a smaller set, the optimal convergence rate from \ref{thm:convergence_sharp} is a strict improvement in the interesting cases when the minorization constant is small and $\alpha_{2r} < \alpha_{r}$ from \eqref{eq:optimal_HM_rate}.

The weighted-$V$ bounds obtained in the preceding result can be transferred to Wasserstein bounds by relating the underlying metric to the Lyapunov function $V$. Existing results on Wasserstein ergodicity (see, e.g., \cite{hairer:mattingly:2011, HairerMattinglyScheutzow2011, Eberle2016}) typically use a form of Wasserstein contractivity of the Markov kernel—either globally or on a suitable subset of the state space—together with a Lyapunov drift condition. 
By contrast, the present framework replaces such contractivity assumptions with a uniform drift condition and an associated local minorization, yielding Wasserstein convergence without imposing explicit Lipschitz or coupling conditions on the transition kernel.
In certain contexts, a local minorization condition is impossible to establish and the proofs in this section must be extended to hold for a local Wasserstein contraction.
However, the present framework can be a useful tool to establish uniform ergodicity equivalents in Wasserstein distances.

The Wasserstein distance is defined for Borel probability measures on a complete separable metric space $\X$ and a lower semicontinuous metric $d : \X \times \X \to [0, \infty]$ by minimizing the expected distances over all joint distributions so that
\[
W_d\left( \P^t(x, \cdot) , \Pi \right)
= \inf\left\{ \int_{\X \times \X} d(x, y) \Gamma(dx, dy) : \Gamma \in \C(\P^t(x, \cdot), \Pi(\cdot) ) \right\}
\]
where $\C(\P^t(x, \cdot), \Pi(\cdot) )$ denotes the set of all Borel probability measures on $\X \times \X$ satisfying $\Gamma(\cdot \times \X) = \P^t(x, \cdot)$ and $\Gamma(\X \times \cdot) = \Pi(\cdot)$.

\begin{corollary}[Wasserstein extension] \label{cor:wasserstein}
Let assumptions \eqref{eq:drift} with a Borel measurable function $V : \X \to [0, \infty)$ and \eqref{eq:local_minorization} hold and $\alpha_r, \gamma_r$ as defined in Theorem~\ref{thm:convergence_sharp}.
Suppose further that there exists a metric $d$ on $\X$ such that $(\X, d)$ is a complete separable metric space and for some $x_0 \in \X$ and some concave nondecreasing function $c : [0, \infty) \to [0, \infty)$,
\[
d(x,x_0) \le c(V(x)).
\]
Then for all $x \in \X$ and all integers $t \ge 0$,
\[
W_d (P^{t + 1}(x,\cdot), \Pi)
\le 2 c(K) \rho_{t}(\gamma_r, \alpha_r).
\]
\end{corollary}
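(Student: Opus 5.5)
The plan is to transfer the weighted bound of Theorem~\ref{thm:convergence_sharp} to $W_d$ through Kantorovich--Rubinstein duality. On the complete separable space $(\X,d)$, and for measures with finite first moment,
\[
W_d(\mu,\Pi) = \sup\Big\{ \int_{\X} f\, d\mu - \int_{\X} f\, d\Pi : f \text{ is } 1\text{-Lipschitz w.r.t. } d \Big\}.
\]
Adding a constant to $f$ does not change $\int f\,d\mu - \int f\,d\Pi$. So we may normalize $f(x_0)=0$, which gives $|f(y)| \le d(y,x_0) \le c(V(y))$ for every $y$. Hence every admissible $f$, after this shift, satisfies $|f|\le c(V)$.

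Taking $\mu = P^{t+1}(x,\cdot)$, Theorem~\ref{thm:convergence_sharp} gives
\[
\Big|P^{t+1}f(x) - \int_{\X} f\,d\Pi\Big| \le \|P^{t+1}(x,\cdot)-\Pi\|_{c(V)} \le 2c(K)\rho_t(\gamma_r,\alpha_r).
\]
Taking the supremum over $f$ yields the claimed bound.

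It remains to justify the duality, namely that both measures have a finite first moment. For $P^{t+1}(x,\cdot)$ this follows from Jensen's inequality as in the earlier proofs:
\[
\int_{\X} d(y,x_0)\,P^{t+1}(x,dy) \le P^{t}\big(P c(V)\big)(x) \le c(K).
\]

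For $\Pi$, invariance gives $\int c(V)\,d\Pi = \int P c(V)\,d\Pi \le c(K)$. This is legitimate because $c(V)\ge 0$ and Tonelli's theorem applies. Since $d(\cdot,x_0)\le c(V)$, it follows that $\Pi$ also has a finite first moment.

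The main obstacle is technical. If $d$ is only lower semicontinuous or possibly infinite, the duality must be taken in its general form. The Lipschitz test functions should then be taken bounded and measurable, or truncated as $f\wedge n \vee(-n)$, followed by a monotone limit. Truncation preserves the bound $|f|\le c(V)$, so the weighted estimate still applies.

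A more elementary route avoids duality altogether. One couples $P^{t+1}(x,\cdot)$ and $\Pi$ through the signed measure $|P^{t+1}(x,\cdot)-\Pi|$: keep the common part $P^{t+1}(x,\cdot)\wedge\Pi$ on the diagonal, and couple the remaining mass independently. Using $d(y,z)\le d(y,x_0)+d(z,x_0)$, this gives
\[
W_d \le \int_{\X} d(\cdot,x_0)\,|P^{t+1}(x,\cdot)-\Pi| \le \|P^{t+1}(x,\cdot)-\Pi\|_{c(V)},
\]
with the factor accounted for by the normalization of $\|\cdot\|_{c(V)}$. I would present this coupling argument as the primary proof, since it requires no duality or moment assumptions.
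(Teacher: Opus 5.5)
Your proof is correct. Your duality route is exactly the paper's proof: normalize a $1$-Lipschitz $f$ so that $f(x_0)=0$, get $|f|\le d(\cdot,x_0)\le c(V)$, apply the weighted bound of Theorem~\ref{thm:convergence_sharp}, and conclude by Kantorovich--Rubinstein. Your check that $P^{t+1}(x,\cdot)$ and $\Pi$ have finite first moments (Jensen, and $\int c(V)\,d\Pi=\int Pc(V)\,d\Pi\le c(K)$) fills in a detail the paper leaves implicit.

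The coupling argument you designate as primary is a genuinely different route. Take $\mu=P^{t+1}(x,\cdot)$ and $\delta=\norm{\mu-\Pi}_{\mathrm{TV}}>0$; otherwise there is nothing to prove. The plan $(\mathrm{id},\mathrm{id})_{\#}(\mu\wedge\Pi)+\delta^{-1}(\mu-\Pi)_+\otimes(\mu-\Pi)_-$ has marginals $\mu$ and $\Pi$. The triangle inequality through $x_0$ then gives $W_d(\mu,\Pi)\le\int d(\cdot,x_0)\,d|\mu-\Pi|$. The paper normalizes $\norm{\mu-\Pi}_W=\int W\,d|\mu-\Pi|$ without a factor $1/2$, so this is directly at most $\norm{\mu-\Pi}_{c(V)}\le 2c(K)\rho_t(\gamma_r,\alpha_r)$. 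There is in fact no factor to account for. Also, $|\mu-\Pi|$ is the positive total variation measure, not a signed one.

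What the coupling buys is that it uses only the definition of $W_d$ as an infimum over couplings. It needs no duality theorem, no completeness or separability beyond making $W_d$ well defined, and no moment verification. The duality route is shorter once Kantorovich--Rubinstein is granted. Both routes pass through the same quantity $\int d(\cdot,x_0)\,d|\mu-\Pi|$ (in the duality route because $|f|\le d(\cdot,x_0)$ after normalization), so the constants are identical.
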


\begin{proof}
    Suppose $\psi: \X \to \mathbb{R}$ is a Borel measurable function such that $|\psi(x) - \psi(y)| \leq d(x,y)$ for all $x,y \in \X$. It follows that 
    $$
    |\psi(x) - \psi(x_0)| \leq d(x,x_0) \leq c(V(x)). 
    $$

    \noindent
    Let $\tilde{\psi}: \X \to \mathbb{R}$ be defined by $\tilde{\psi}(x) = \psi(x) - \psi(x_0)$ for every $x \in \X$. By Theorem \ref{thm:convergence_sharp}, it follows that 
$$
\left|
P^{t} \tilde{\psi}(x) - \int_{\X} \tilde{\psi} \, d\Pi
\right|
\le 2 c(K) \rho_{t-1} 
$$

\noindent
for every $t \geq 1$. Since $P^{t}(x, \cdot)$ and $\Pi(\cdot)$ are probability measures, it follows that 
$$
\left|
P^{t} {\psi}(x) - \int_{\X} \psi \, d\Pi
\right|
\le 2 c(K) \rho_{t-1}
$$

\noindent
The result now follows by noting that 
$$
W_d \left( P^{t}(x, \cdot), \Pi(\cdot) \right) = \sup_{\psi: \X \to \mathbb{R} {\mbox{ \small{\tiny{ such that} } } } |\psi(x) - \psi(y)| \leq d(x,y) \; \forall x,y \in \X} \left|
P^{t} {\psi}(x) - \int_{\X} \psi \, d\Pi
\right|. 
$$
Here we used that since $\X$ is a complete separable metric space and $d$ is continuous, the Wasserstein distance admits a dual representation as the supremum over Lipschitz functions by Kantorovich-Rubinstein duality \citep{kantorovich:rubinshtein:1958, villani:2008}.
\end{proof}

\section{Examples}
\label{section:examples}

\noindent
In this section, we illustrate the applicability of the proposed framework through several examples of Markov chain Monte Carlo algorithms on general state spaces. In each case, we focus on the associated Markov transition kernel and verify that it satisfies a uniform $V$-drift condition together with a local minorization condition. This allows us to establish hyper-$V$ uniform ergodicity with explicit rates, while avoiding the need for global minorization arguments used in previous analyses.

\subsection{The P\'olya--Gamma sampler for logistic regression} \label{example:pg_sampler}

\noindent
We consider the P\'olya--Gamma Gibbs sampler introduced by \cite{PolsonScottWindle2013} in the context of Bayesian logistic regression. Let $y = (y_1,\dots,y_n)$ denote binary responses taking values in $\{0, 1 \}$ with corresponding covariates $x_1,\dots,x_n \in \R^p$, and let $X \in \R^{n \times p}$ denote the design matrix with rows $x_i^\top$. Let $\beta \in \R^p$ denote the regression coefficients, and assume a Gaussian prior $\beta \sim N(m_0, \Sigma_0)$, where $N(m_0, \Sigma_0)$ denotes the $p$-dimensional Gaussian distribution with mean $m_0 \in \R^p$ and symmetric covariance matrix $\Sigma_0 \in \R^{p \times p}$. We will assume that $\Sigma_0$ is positive definite. 

To sample from the intractable posterior $\Pi(\cdot \mid X, y)$ (obtained by combining the logistic regression likelihood with the prior above), the P\'olya--Gamma augmentation introduces latent variables $\omega = (\omega_1,\dots,\omega_n)$ such that, conditional on $\beta$, the variables $\omega_i$ are independent with
\[
\omega_i \mid \beta \sim \mathrm{PG}(1, x_i^\top \beta), \quad i = 1,\dots,n,
\]
where $\mathrm{PG}(b,c)$ denotes the P\'olya--Gamma distribution supported on $(0,\infty)$ with parameters $b > 0$ and $c \in \R$ introduced in \cite{PolsonScottWindle2013}. The resulting Gibbs sampler alternates between updating $\omega$ and $\beta$, and induces a Markov chain $\{\beta^{(t)}\}_{t \ge 0}$ on $\R^p$ obtained by marginalizing out $\omega$. 
Let $P_{PG}$ denote the Markov transition kernel.

The one-step transition of this Markov chain (with Markov transition kernel denoted by $P_{\mathrm{PG}}(\cdot, \cdot)$) from $\beta^{(t)}$ to $\beta^{(t+1)}$ can be described as follows. Given $\beta^{(t)}$, sample
\[
\omega_i^{(t+1)} \mid \beta^{(t)} \sim \mathrm{PG}(1, x_i^\top \beta^{(t)}), 
\quad i = 1,\dots,n,
\]
independently, and define $\Omega^{(t+1)} = \mathrm{diag}(\omega_1^{(t+1)},\dots,\omega_n^{(t+1)})$. Then sample
\[
\beta^{(t+1)} \mid \omega^{(t+1)} \sim N(m_{\omega^{(t+1)}}, \Sigma_{\omega^{(t+1)}}),
\]
where
\[
\Sigma_{\omega} = \left(X^\top \Omega X + \Sigma_0^{-1}\right)^{-1}, 
\qquad
m_{\omega} = \Sigma_{\omega} \left(X^\top \kappa + \Sigma_0^{-1} m_0 \right),
\]
with $\kappa = y - \tfrac{1}{2}\mathbf{1}_n$. The result below shows that the marginal Markov chain $\{\beta^{(t)}\}_{t\ge 0}$ induced by the P\'olya--Gamma Gibbs sampler satisfies a uniform drift condition and a local minorization condition, and is therefore hyper-$V$ uniformly ergodic. In addition, the resulting bounds extend to the Wasserstein distance.

\begin{theorem} \label{thm:pg_convergence}
The P\'olya--Gamma chain $\{\beta^{(t)}\}_{t\ge 0}$ described above is hyper-$V$ uniformly ergodic for $V(\beta) = \|\beta\|^2$. In particular, if $P_{\mathrm{PG}}$ denotes the corresponding Markov transition kernel and $\rho_{t}$ is defined by \eqref{eq:sharp_rate}, then for all $\beta \in \R^p$ and integers $t \ge 1$,
\begin{align*}
\| P_{\mathrm{PG}}^{t}(\beta,\cdot) - \Pi(\cdot \mid X, y) \|_{\mathrm{TV}}
&\le 
\rho_{t}(\gamma_r, \e_r)
\\
\| P_{\mathrm{PG}}^{t + 1}(\beta,\cdot) - \Pi(\cdot \mid X, y) \|_{1 + V}
&\le 2 (1 + K) \rho_{t}(\gamma_r, \e_r)
\end{align*}
where for $r > 1$, $\gamma_r = 1 - 1/r$
\begin{align*}
&K = \|\Sigma_0\|^2 \cdot \|X^\top \kappa + \Sigma_0^{-1} m_0\|^2
\;+\; \tr(\Sigma_0),
&\e_r = \prod_{i=1}^n \big[ \cosh( \|x_i\|\sqrt{rK} /2 ) \big]^{-1}.
\end{align*}
Moreover, if $W_{ \ell_2 }$ denotes the Wasserstein distance induced by the $\ell_2$-metric on $\mathbb{R}^p$, then 
\[
W_{ \ell_2 }\big(P_{\mathrm{PG}}^{t+1}(\beta,\cdot), \Pi(\cdot \mid X, y) \big)
\le  2 (1 + K) \rho_{t}(\gamma_r, \e_r),
\qquad \forall \beta \in \R^p,\; t \ge 1.
\]
\end{theorem}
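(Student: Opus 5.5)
The plan is to verify the two hypotheses of Theorem~\ref{thm:convergence_sharp}, namely the uniform drift \eqref{eq:drift} with $V(\beta)=\|\beta\|^2$ and the local minorization \eqref{eq:local_minorization} on $C_r=\{\|\beta\|^2\le rK\}$. The stated TV and weighted bounds then follow from that theorem with $c(v)=1+v$, which is concave and nondecreasing with $c(K)=1+K$. The Wasserstein statement follows from Corollary~\ref{cor:wasserstein} with $d$ the Euclidean metric and $x_0=0$, since $\|\beta\|\le 1+\|\beta\|^2=c(V(\beta))$. (One may even take $c(v)=\sqrt v$ there, but $1+v$ matches the stated constant.)

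\emph{Drift.} Condition on $\omega$. Then
\[
E[\|\beta'\|^2\mid\omega]=\|m_\omega\|^2+\tr(\Sigma_\omega).
\]
Since $X^\top\Omega X\succeq 0$, we have $\Sigma_\omega=(X^\top\Omega X+\Sigma_0^{-1})^{-1}\preceq\Sigma_0$ in Loewner order. This gives $\tr(\Sigma_\omega)\le\tr(\Sigma_0)$ and $\|\Sigma_\omega\|\le\|\Sigma_0\|$. Hence
\[
\|m_\omega\|^2\le\|\Sigma_0\|^2\,\|X^\top\kappa+\Sigma_0^{-1}m_0\|^2 .
\]
The bound holds for every $\omega$, so integrating over $\omega\mid\beta$ gives $P_{\mathrm{PG}}V(\beta)\le K$ uniformly in $\beta$.

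\emph{Local minorization.} The kernel is
\[
P_{\mathrm{PG}}(\beta,A)=\int N(A;m_\omega,\Sigma_\omega)\prod_i p(\omega_i\mid x_i^\top\beta)\,d\omega .
\]
It suffices to minorize the PG density pointwise in $\omega$. We use the exponential-tilting representation
\[
p_{\mathrm{PG}(1,c)}(w)=\cosh(c/2)\,e^{-c^2w/2}\,p_{\mathrm{PG}(1,0)}(w).
\]
Since $\cosh(c/2)\ge1$, this gives
\[
p_{\mathrm{PG}(1,c)}(w)\ge e^{-c^2w/2}p_{\mathrm{PG}(1,0)}(w)=\frac{p_{\mathrm{PG}(1,|c|_{\max})}(w)}{\cosh(|c|_{\max}/2)}\, e^{-(c^2-c_{\max}^2)w/2}.
\]
When $c^2\le c_{\max}^2$ the last exponential is at least $1$. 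On $C_r$, Cauchy--Schwarz gives $|x_i^\top\beta|\le\|x_i\|\sqrt{rK}=:c_i$. Therefore
\[
p(\omega_i\mid x_i^\top\beta)\ge[\cosh(c_i/2)]^{-1}\,p_{\mathrm{PG}(1,c_i)}(\omega_i),
\]
uniformly in $\beta\in C_r$. Taking the product over $i$ and integrating against the Gaussian conditional yields \eqref{eq:local_minorization} with $\alpha_r=\e_r$. The minorizing probability measure is $\nu(A)=\int N(A;m_\omega,\Sigma_\omega)\prod_i p_{\mathrm{PG}(1,c_i)}(\omega_i)\,d\omega$, which is indeed a probability measure. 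We also have $\e_r\in(0,1]$, as required.

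\emph{Main obstacle.} The main step is the PG density comparison. It relies on the exponential-tilting identity from \cite{PolsonScottWindle2013}, which is exactly what makes this minorization much easier than the global one of \cite{ChoiHobert2013}: the bound only needs to hold for $|x_i^\top\beta|$ bounded. The other point to check is existence and uniqueness of the invariant measure $\Pi(\cdot\mid X,y)$. This is standard for the data augmentation Gibbs sampler, and in any case the minorization of $P^2$ derived in the proof of Theorem~\ref{thm:main} implies uniqueness.
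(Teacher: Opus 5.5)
Your proposal is correct and follows essentially the same route as the paper. It uses the same Loewner-order bound $\Sigma_\omega \preceq \Sigma_0$ to get the uniform drift constant $K$, and the same P\'olya--Gamma exponential-tilting density comparison on $\{\|\beta\|^2 \le rK\}$ to get $\epsilon_r$ and the same minorizing measure $\nu$; it then invokes Theorem~\ref{thm:convergence_sharp} with $c(v)=1+v$ and Corollary~\ref{cor:wasserstein} via $\|\beta\|\le 1+\|\beta\|^2$. Your side remark is also valid: taking $c(v)=\sqrt{v}$ in the corollary would sharpen the Wasserstein constant to $2\sqrt{K}$.
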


\begin{proof}
Define $V(\beta) = \|\beta\|^2$. Then for all $\beta \in \R^p$,
\begin{align*}
\P_{\mathrm{PG}} V(\beta)
= \E\big[ V(\beta^{(t+1)}) \mid \beta^{(t)} = \beta \big] 
= \E\big[ \|\beta^{(t+1)}\|^2 \mid \beta^{(t)} = \beta \big].
\end{align*}
Since $\beta^{(t+1)} \mid \omega^{(t+1)} \sim N(m_{\omega^{(t+1)}}, \Sigma_{\omega^{(t+1)}})$, we have
\[
\E\big[ \|\beta^{(t+1)}\|^2 \mid \omega^{(t+1)} \big]
= \|m_{\omega^{(t+1)}}\|^2 + \tr(\Sigma_{\omega^{(t+1)}}).
\]
Therefore,
\begin{align*}
\P_{\mathrm{PG}} V(\beta)
&\le \E\big[ \|m_{\omega^{(t+1)}}\|^2 + \tr(\Sigma_{\omega^{(t+1)}}) \mid \beta^{(t)} = \beta \big] \\
&= \E\big[ \|\Sigma_{\omega^{(t+1)}} (X^\top \kappa + \Sigma_0^{-1} m_0)\|^2 
+ \tr(\Sigma_{\omega^{(t+1)}}) \mid \beta^{(t)} = \beta \big].
\end{align*}

\noindent
Now observe that
\[
\Sigma_{\omega} = \big(X^\top \Omega X + \Sigma_0^{-1}\big)^{-1}
\preceq \Sigma_0,
\]
since $X^\top \Omega X$ is positive semidefinite. Consequently 
$\|\Sigma_{\omega}\| \le \|\Sigma_0\|$ and $\tr(\Sigma_{\omega}) 
\le \tr(\Sigma_0)$, for all $\omega$. It follows that
\begin{align*}
\P_{\mathrm{PG}} V(\beta)
&\le \|\Sigma_0\|^2 \cdot \|X^\top \kappa + \Sigma_0^{-1} m_0\|^2
\;+\; \tr(\Sigma_0) =: K, 
\end{align*}

\noindent
and hence $\P_{\mathrm{PG}} V$ is uniformly bounded. 

Let $p_{\mathrm{PG}}(\beta, \beta')$ denote the transition density of the marginal chain. It admits the representation
\[
p_{\mathrm{PG}}(\beta, \beta')
= \int p(\beta' \mid \omega)\, p(\omega \mid \beta)\, d\omega,
\]
where $\beta' \mid \omega \sim N(m_\omega, \Sigma_\omega)$ and $p(\omega \mid \beta) = \prod_{i=1}^n p(\omega_i \mid \beta)$ with $\omega_i \mid \beta \sim \mathrm{PG}(1, x_i^\top \beta)$, for $i=1,\dots,n$. Let $C := \{\beta \in \R^p : V(\beta) \le rK\}$. Then $\|\beta\|\le \sqrt{rK}$, so
\[
|x_i^\top \beta| \le \|x_i\| \|\beta\| \le \|x_i\|\sqrt{rK} =: c_i. 
\]
Let $f_{PG}(\cdot \mid a, b)$ denote the P\'olya--Gamma density with parameters $a$ and $b$. Using the form of the P\'olya--Gamma density (see \cite[Section 2]{PolsonScottWindle2013}) for all $|t|\le c_i$ and $\omega>0$,
\[
f_{PG}(\omega_i \mid 1,t) = \frac{\cosh(t/2) e^{\frac{c_i^2-t^2}{2} \omega_i}}{\cosh(c_i/2)}\, f_{PG}(\omega \mid 1,c_i) \ge \frac{1}{\cosh(c_i/2)}\, f_{PG}(\omega_i \mid 1,c_i).
\]
Hence for all $\beta \in C$,
\[
p(\omega \mid \beta)
= \prod_{i=1}^n f_{PG} (\omega_i \mid 1, x_i^\top \beta)
\ge \epsilon \prod_{i=1}^n f_{PG} (\omega_i \mid 1,c_i),
\quad
\epsilon := \prod_{i=1}^n \big(\cosh(c_i/2)\big)^{-1}.
\]

\noindent
Therefore, for any Borel set $B \subset \R^p$,
\begin{align*}
P_{\mathrm{PG}}(\beta, B)
&= \int_B \int p(\beta' \mid \omega)\, p(\omega \mid \beta)\, d\omega\, d\beta' \\
&\ge \epsilon \int_B \int p(\beta' \mid \omega)\, \prod_{i=1}^n f_{PG} (\omega_i \mid 1, c_i) \, d \omega\, d\beta'.
\end{align*}
Defining
\[
\nu(B) := \int_B \int p(\beta' \mid \omega)\, \prod_{i=1}^n f_{PG}(\omega_i \mid 1, c_i) \, d \omega \, d\beta',
\]
we obtain 
\[
P_{\mathrm{PG}} (\beta, B) \ge \epsilon \, \nu(B),
\quad \beta \in C,
\]
which establishes the local minorization condition. Let $\beta_0 \in \mathbb{R}^p$ denote the vector with all entries equal to zero. Then, for any $\beta \in \mathbb{R}^p$, it follows that 
$$
\|\beta - \beta_0\| = \|\beta\| \leq 1 + \|\beta\|^2 = 1 + V(\beta). 
$$

\noindent
The uniform Wasserstein convergence now follows from Corollary \ref{cor:wasserstein}. 
\end{proof}

We are interested in the dependence on $\e_r$ as a function of the radius $r$ of the sublevel set.
We simulate logistic regression on synthetic data of size $n = 100$ and $p = 10$ repeating the simulation independently $100$ times.
We choose the prior parameters $m_0 = 0$ and $\Sigma_0 = \sigma^2 I_p$ with $\sigma^2 \in \{ .5, 1, 1.5, 2 \}$.
Figure~\ref{figure:pg_sim} plots the dependence of $r$ on $\log(\e_r)$ from $1 < r < 2$ and $\e(r)$ for a small interval of $r$.
The dashed line shows a reference comparison of the scaling with the width of the minorization set radius.
The simulations show that scaling of the minorization constant can rapidly decrease where Theorem~\ref{thm:main} and the optimal convergence rate in Theorem~\ref{thm:convergence_sharp} can provide a substantial benefit over previous convergence results.
Simulations were performed using Python with packages NumPy and Matplotlib.

\begin{figure}[t]
\centering
\begin{subfigure}{.49\linewidth}
  \centering
  \includegraphics[width=\linewidth]{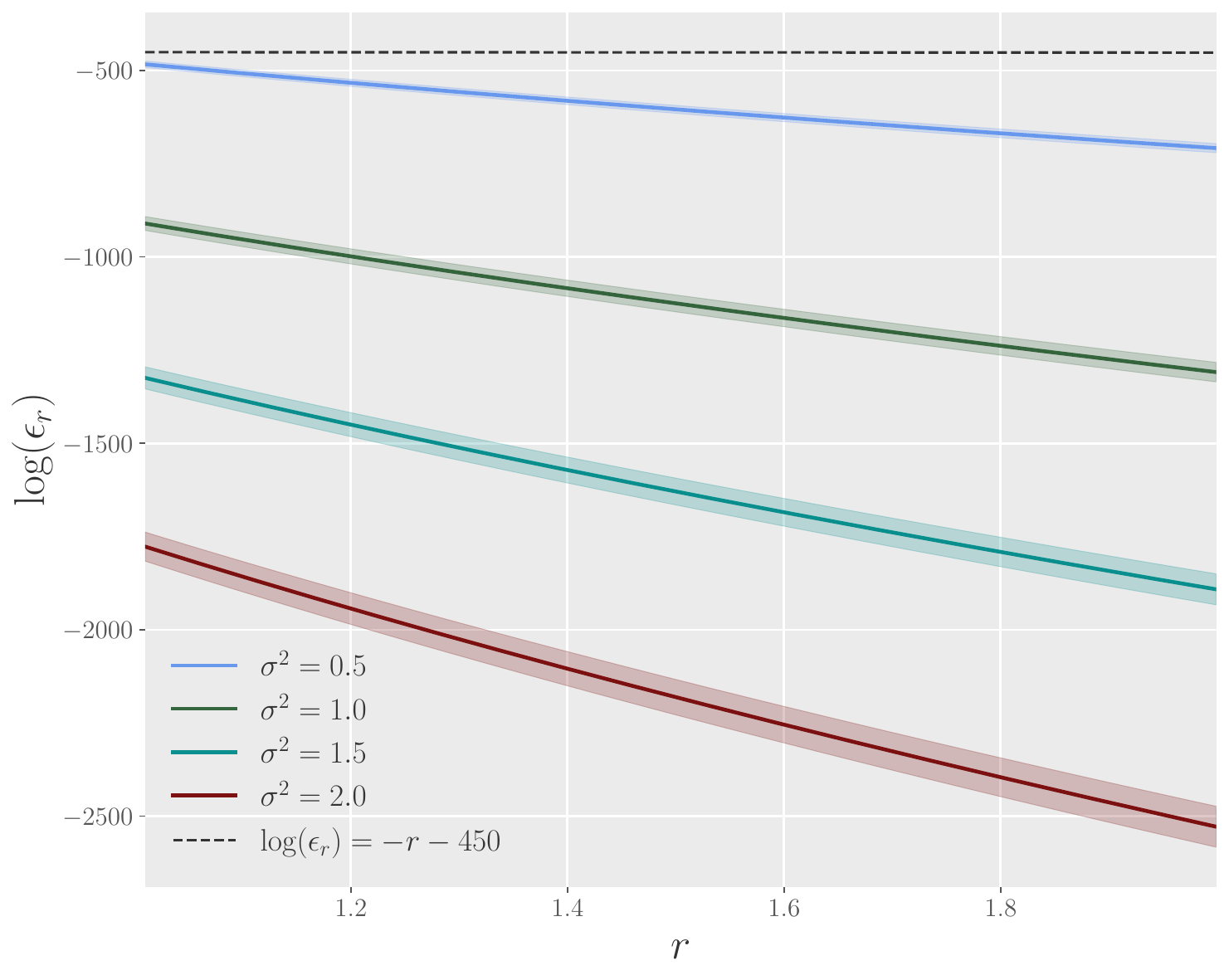}
  \caption{}
\end{subfigure}
\begin{subfigure}{.49\linewidth}
  \centering
  \includegraphics[width=\linewidth]{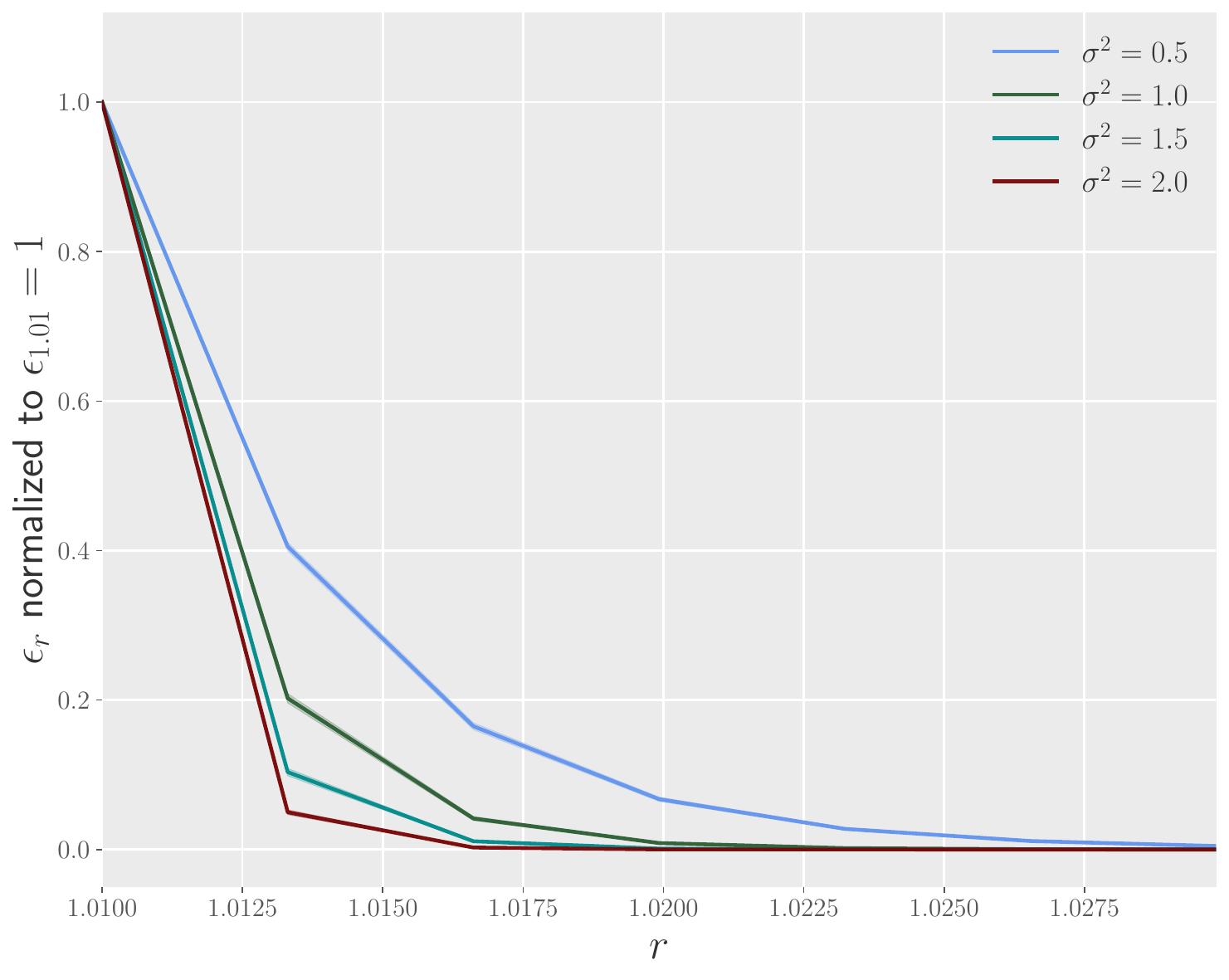}
  \caption{}
\end{subfigure}
\caption{
Logistic regression on synthetic data of size $n = 100$ and $p = 10$ repeated $100$ times with prior parameters $m_0 = 0$ and $\Sigma_0 = \sigma^2 I_p$ with $\sigma^2 \in \{ .5, 1, 1.5, 2 \}$.
Figure (a) shows the dependence of $r$ from $1 < r < 2$ on $\log(\e(r))$ with shaded regions representing 3 standard deviations.
Figure (b) shows the dependence of $r$ on $\e_r$ on a small scale of $r$ and standardized so that $\e_{1.01} = 1$ with shaded regions representing 3 standard deviations.
} 
\label{figure:pg_sim}
\end{figure}

\subsection{The Kolmogorov--Gamma sampler for continuous proportion regression} \label{example:kg_sampler}

\noindent
We next consider the Kolmogorov--Gamma Gibbs sampler in the context of Bayesian regression with continuous proportion data, as proposed by \cite{LeeEtAlKG}. Let $y = (y_1,\dots,y_n)$ denote observations taking values in $(0,1)$, with corresponding covariates $x_1,\dots,x_n \in \R^p$ and design matrix $X \in \R^{n \times p}$. Let $\beta \in \R^p$ denote the regression coefficients, with prior $\beta \sim N(m_0, \Sigma_0)$.

The regression model in \cite{LeeEtAlKG}, combined with the prior above, leads to an intractable posterior $\Pi_{KG} (\cdot \mid X, y)$. To sample from this intractable posterior distribution, \cite{LeeEtAlKG} introduce auxiliary variables $\omega = (\omega_1,\dots,\omega_n)$ such that, conditional on $\beta$, the variables $\omega_1,\dots,\omega_n$ are independent with distributions of the form 
\[
\omega_i \mid \beta, y \sim \mathrm{KG}(\lambda, x_i^\top \beta), \quad i = 1,\dots,n,
\]
where $\mathrm{KG}(\cdot,\cdot)$ denotes the Kolmogorov--Gamma (KG) distribution arising from the augmentation scheme of \cite{LeeEtAlKG}, and $\lambda > 0$ will be assumed to be known.
As in the P\'olya--Gamma setting, we refer to \cite{LeeEtAlKG} for a full description of the KG distribution and invoke its properties only as needed in the sequel. 
This construction induces a Markov chain $\{\beta^{(t)}\}_{t \ge 0}$ on $\R^p$ via marginalization over $\omega$. 

The one-step transition of this Markov chain (with Markov transition kernel denoted by $P_{\mathrm{KG}}(\cdot, \cdot)$) from $\beta^{(t)}$ to $\beta^{(t+1)}$ can be described as follows. Given $\beta^{(t)}$, sample
\[
\omega_i^{(t+1)} \mid \beta^{(t)} \sim \mathrm{KG}(\lambda, x_i^\top \beta^{(t)}),
\quad i = 1,\dots,n,
\]
independently, and define $\Omega^{(t+1)} = \mathrm{diag}(\omega_1^{(t+1)},\dots,\omega_n^{(t+1)})$. Then sample
\[
\beta^{(t+1)} \mid \omega^{(t+1)} \sim N(m_{\omega^{(t+1)}}, \Sigma_{\omega^{(t+1)}}),
\]
where
\[
\Sigma_{\omega} = \left(X^\top \Omega X + \Sigma_0^{-1}\right)^{-1}, 
\qquad
m_{\omega} = \Sigma_{\omega} \left(X^\top z + \Sigma_0^{-1} m_0 \right),
\]
and $z \in \R^n$ is a working response depending on $y$ through the Kolmogorov--Gamma augmentation scheme. As in the P\'olya--Gamma case, the result below shows that the marginal Markov chain $\{\beta^{(t)}\}_{t\ge 0}$ satisfies a uniform drift condition and a local minorization condition, and is therefore hyper-$V$ uniformly ergodic. In addition, the resulting bounds extend to the Wasserstein distance.

\begin{theorem} \label{thm:kg_convergence}
The Kolmogorov--Gamma chain $\{\beta^{(t)}\}_{t\ge 0}$ described above is hyper-$V$ uniformly ergodic for $V(\beta) = \|\beta\|^2$. In particular, if $P_{\mathrm{KG}}$ denotes the corresponding transition kernel and $\rho_{t}$ is defined by \eqref{eq:sharp_rate}, then for all $\beta \in \R^p$ and integers $t \ge 1$,
\begin{align*}
\| P_{\mathrm{KG}}^{t}(\beta,\cdot) - \Pi_{KG}(\cdot \mid X, y) \|_{\mathrm{TV}}
&\le \rho_{t}(\gamma_r, \e_r), \\
\| P_{\mathrm{KG}}^{t + 1}(\beta,\cdot) - \Pi_{KG}(\cdot \mid X, y) \|_{1 + V}
&\le 2 (1 + K) \rho_{t}(\gamma_r, \e_r),
\end{align*}
where for any $r > 1$, $\gamma_r = 1 - 1/r$ and
\begin{align*}
&c_i = \|x_i\| \sqrt{r K},
&\epsilon_r = \left( \prod_{i=1}^n \frac{c_i/2}{\sinh(c_i/2)} \right)^\lambda,
\\
&K = \|\Sigma_0\|^2 \|X^\top z + \Sigma_0^{-1} m_0\|^2
\;+\; \tr(\Sigma_0).
\end{align*}
Moreover, if $W_{ \ell_2 }$ denotes the Wasserstein distance induced by the $\ell_2$-metric, then
\[
W_{ \ell_2 }\big(P_{\mathrm{KG}}^{t+1}(\beta,\cdot), \Pi_{KG}(\cdot \mid X, y) \big)
\le 2 ( 1 + K ) \rho_{t}(\gamma_r, \e_r),
\qquad \forall \beta \in \R^p,\; t \ge 1.
\]
\end{theorem}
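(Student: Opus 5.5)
The plan mirrors the proof of Theorem~\ref{thm:pg_convergence}: verify the uniform drift condition \eqref{eq:drift} and the local minorization \eqref{eq:local_minorization} for $V(\beta) = \|\beta\|^2$, then invoke Theorem~\ref{thm:convergence_sharp} with $c(v) = 1 + v$ (concave and nondecreasing, so $2c(K) = 2(1+K)$), and finally Corollary~\ref{cor:wasserstein} with $x_0 = 0$, using $\|\beta\| \le 1 + \|\beta\|^2$.

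\textbf{Drift.} Condition on $\omega$ and use $\E[\|\beta'\|^2 \mid \omega] = \|m_\omega\|^2 + \tr(\Sigma_\omega)$. Since $\omega_i > 0$, the matrix $X^\top \Omega X$ is positive semidefinite. Hence $\Sigma_\omega \preceq \Sigma_0$, which gives $\|\Sigma_\omega\| \le \|\Sigma_0\|$ and $\tr(\Sigma_\omega) \le \tr(\Sigma_0)$. It follows that $P_{\mathrm{KG}} V(\beta) \le \|\Sigma_0\|^2 \|X^\top z + \Sigma_0^{-1} m_0\|^2 + \tr(\Sigma_0) = K$ uniformly in $\beta$. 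One point needs checking: the working response $z$ must depend only on $y$ (and $\lambda$), not on $\omega$ or $\beta$. This is the case in the description above, so the bound is free of $\beta$.

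\textbf{Minorization.} Fix $r > 1$ and let $C = \{\beta : \|\beta\|^2 \le rK\}$. For $\beta \in C$ we have $|x_i^\top \beta| \le c_i$. The key fact needed about the KG law is its exponential-tilting structure, analogous to the P\'olya--Gamma case. Following \cite{LeeEtAlKG}, I expect the density of $\mathrm{KG}(\lambda, t)$ to take the form
\[
f_{KG}(\omega \mid \lambda, t) = \left( \frac{t/2}{\sinh(t/2)} \right)^{-\lambda} e^{-t^2 \omega/2} f_{KG}(\omega \mid \lambda, 0),
\]
with $\omega > 0$. The normalizing factor is the Laplace transform $\E_0[e^{-t^2 \omega/2}] = \bigl((t/2)/\sinh(t/2)\bigr)^{\lambda}$, which is exactly the factor appearing in $\epsilon_r$.

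\textbf{Choice of reference measure.} Here I would \emph{not} tilt to $c_i$ as in the PG proof. Instead I would use the untilted base density. For $|t| \le c_i$,
\[
f_{KG}(\omega \mid \lambda, t) = \left( \frac{\sinh(t/2)}{t/2} \right)^{\lambda} e^{-t^2\omega/2} f_0(\omega) \ge e^{-c_i^2 \omega/2} f_0(\omega) = \left( \frac{c_i/2}{\sinh(c_i/2)} \right)^{\lambda} f_{KG}(\omega \mid \lambda, c_i).
\]
This uses that $\sinh(s)/s \ge 1$ and that $t^2 \le c_i^2$. Taking the product over $i$ gives $p(\omega \mid \beta) \ge \epsilon_r \prod_i f_{KG}(\omega_i \mid \lambda, c_i)$. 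Integrating against $p(\beta' \mid \omega)$ then yields $P_{\mathrm{KG}}(\beta, \cdot) \ge \epsilon_r \nu(\cdot)$ on $C$, with $\nu$ the mixture of the Gaussians $N(m_\omega, \Sigma_\omega)$ over $\prod_i \mathrm{KG}(\lambda, c_i)$. This is a probability measure, so \eqref{eq:local_minorization} holds with $\alpha_r = \epsilon_r \in (0, 1]$.

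\textbf{Main obstacle.} The main obstacle is the tilting identity for the KG density and the exact form of its Laplace transform. If it differs, for example through an additional $\lambda$-dependent or $y$-dependent tilt, I would redo the same monotone-in-$t^2$ lower bound with the correct normalizer. Once both conditions are in hand, the TV and $\|\cdot\|_{1+V}$ bounds follow from Theorem~\ref{thm:convergence_sharp}, and the $W_{\ell_2}$ bound follows from Corollary~\ref{cor:wasserstein}.
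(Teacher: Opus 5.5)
Your proposal is correct and follows the paper's proof essentially step for step. It uses the same drift bound via $\Sigma_\omega \preceq \Sigma_0$, the same density comparison giving $\alpha_r = \epsilon_r$ with minorizing measure the Gaussian mixture over $\prod_i \mathrm{KG}(\lambda, c_i)$, and the same appeals to Theorem~\ref{thm:convergence_sharp} with $c(v) = 1 + v$ and to Corollary~\ref{cor:wasserstein} with $x_0 = 0$. Your passage through the base density $f_{KG}(\cdot \mid \lambda, 0)$ is equivalent to the ratio identity the paper cites from Theorem~2 of \cite{LeeEtAlKG}, so despite your remark about not tilting to $c_i$, you end up with exactly the paper's reference measure.
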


\begin{proof}
We follow the same strategy as in the P\'olya--Gamma case, and highlight only the essential modifications. Let $V(\beta) = \|\beta\|^2$. Then for all $\beta \in \R^p$,
\[
P_{\mathrm{KG}} V(\beta)
= \E\big[ \|\beta^{(t+1)}\|^2 \mid \beta^{(t)} = \beta \big]. 
\]
Since $\beta^{(t+1)} \mid \omega \sim N(m_\omega, \Sigma_\omega)$, it follows that 
\[
P_{\mathrm{KG}} V(\beta)
\le \E\big[ \|m_\omega\|^2 + \tr(\Sigma_\omega) \mid \beta \big].
\]
Using $\Sigma_\omega = (X^\top \Omega X + \Sigma_0^{-1})^{-1} \preceq \Sigma_0$, we obtain $\|\Sigma_\omega\| \le \|\Sigma_0\|$ and $\tr(\Sigma_\omega) \le \tr(\Sigma_0)$, and therefore
\[
P_{\mathrm{KG}} V(\beta)
\le \|\Sigma_0\|^2 \|X^\top z + \Sigma_0^{-1} m_0\|^2
\;+\; \tr(\Sigma_0)
=: K.
\]
This establishes the uniform drift condition. For the local minorization condition, let
\[
p_{\mathrm{KG}}(\beta,\beta')
= \int p(\beta' \mid \omega)\, p(\omega \mid \beta)\, d\omega,
\]
and define $C := \{\beta \in \R^p : V(\beta) \le rK\}$. Then $\|\beta\|\le \sqrt{rK}$, so
\[
|x_i^\top \beta|
\le \|x_i\| \|\beta\|
\le \|x_i\| \sqrt{rK}
=: c_i.
\]

\noindent
Let $f_{KG}(\cdot \mid b, c)$ denote the density of the $KG(b,c)$ distribution. Using the explicit form of the Kolmogorov--Gamma density (see Theorem~2 of \cite{LeeEtAlKG}), for all $|t|\le c_i$, we have 
\[
f_{KG}(\omega_i \mid \lambda, t)
=
\left( \frac{\sinh(t/2)/(t/2)}{\sinh(c_i/2)/(c_i/2)} \right)^\lambda
\exp\!\left(\frac{c_i^2 - t^2}{2}\omega_i\right)
f_{KG}(\omega_i \mid \lambda, c_i).
\]
Since the function $t \mapsto \sinh(t/2)/(t/2)$ is an even function (on $\mathbb{R}$), increasing on $\mathbb{R}_+$, and satisfies $\sinh(t/2)/(t/2) \ge 1$ (with $\sinh(0)/0$ defined as $1$), it follows that
\[
f_{KG}(\omega_i \mid \lambda, t)
\ge \left( \frac{c_i/2}{\sinh(c_i/2)} \right)^\lambda
\, f_{KG}(\omega_i \mid \lambda, c_i).
\]

Thus for all $\beta \in C$,
\[
p(\omega \mid \beta)
= \prod_{i=1}^n f_{KG}(\omega_i \mid \lambda, x_i^\top \beta)
\ge \epsilon \prod_{i=1}^n f_{KG}(\omega_i \mid \lambda, c_i),
\quad
\epsilon := \left( \prod_{i=1}^n \frac{c_i/2}{\sinh(c_i/2)} \right)^\lambda. 
\]
Proceeding exactly as in the P\'olya--Gamma case, define
\[
\nu(B) := \int_B \int p(\beta' \mid \omega)
\, \prod_{i=1}^n f_{KG}(\omega_i \mid \lambda, c_i)\, d\omega\, d\beta',
\]
so that
\[
P_{\mathrm{KG}}(\beta, B)
\ge \epsilon \, \nu(B),
\quad \beta \in C,
\]
establishing the local minorization condition. Finally, since $\|\beta\| \le 1 + V(\beta)$, the Wasserstein convergence follows from Corollary~\ref{cor:wasserstein}. 
\end{proof}

\subsection{Independence Metropolis-Hastings sampler} \label{example:imh}

The previous examples showed that a uniform drift condition \eqref{eq:drift} combined with a local minorization \eqref{eq:local_minorization} is easier to verify than a global minorization.
However, this example shows this is not always the case.
Consider the Metropolis-Hastings Markov chain with independent proposal.
Let $Q$ be a Borel probability measure on $\X$, and we will assume $dQ/d\Pi$ exists and is finite on $\X$.
Let $\delta_a$ denote the Dirac measure at $a \in \X$.
Let $\wedge, \vee$ denote the minimum and maximum respectively.
Define the Metropolis-Hastings Markov transition kernel with proposal $Q$ for $x \in \X$ by 
\[
\P_{IMH}(x, dy)
= a(x, y) Q(dy)
+ \delta_x(dy) \int_{\X} \left[ 1 - a(x, y) \right] Q(dy)
\]
where the acceptance function is defined
\[
a(x, y) = 1 \wedge \left( \frac{ \frac{dQ}{d\Pi}(x) }{ \frac{dQ}{d\Pi}(y) } \right).
\]
In order to be geometrically ergodic, we must have $\inf_{x \in \X} dQ/d\Pi(x) > 0$ and in this case, we have a contraction in total variation yielding uniform ergodicity \citep{mengersen:tweedie:1996}.
The following shows a drift condition \eqref{eq:drift}.

\begin{proposition}
\label{proposition:imh_drift}
Define the drift function $V = dQ/d\Pi$ and assume $\int_{\X} V dQ < \infty$.
Then for every $x \in \X$,
\[
\P_{IMH} V(x)
\le \int_{\X} V(y) Q(dy).
\]
\end{proposition}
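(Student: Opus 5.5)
We compute $\P_{IMH} V(x)$ directly from the definition of the kernel and split it into the accepted-move part and the rejection part:
\[
\P_{IMH} V(x) = \int_{\X} a(x,y) V(y)\, Q(dy) + V(x) \int_{\X} \left[1 - a(x,y)\right] Q(dy).
\]
Writing $w = V = dQ/d\Pi$, the acceptance function is $a(x,y) = 1 \wedge (w(x)/w(y))$. Hence $a(x,y) V(y) = w(y) \wedge w(x)$ and $1 - a(x,y) = (1 - w(x)/w(y))_+$. Multiplying the latter by $V(x) = w(x)$ gives
\[
V(x)\left[1 - a(x,y)\right] = w(x)\,\big(1 - w(x)/w(y)\big)_+ ,
\]
which is nonzero only on the set $\{w(y) > w(x)\}$.

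The key step is a pointwise comparison in $y$. We claim that for each $y$,
\[
w(y) \wedge w(x) + w(x)\big(1 - w(x)/w(y)\big)_+ \le w(y).
\]
If $w(y) \le w(x)$, the left side equals $w(y)$ and the claim holds with equality. If $w(y) > w(x)$, the left side equals $2w(x) - w(x)^2/w(y)$. The difference from the right side is
\[
w(y) - 2w(x) + \frac{w(x)^2}{w(y)} = \frac{\big(w(y) - w(x)\big)^2}{w(y)} \ge 0,
\]
so the claim holds. Integrating this pointwise inequality against $Q(dy)$ yields $\P_{IMH} V(x) \le \int_{\X} V\, dQ$, which is finite by assumption.

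The only technical care needed concerns points where $w(y) = 0$. There $a(x,y)$ should be read as $1$, or equivalently the ratio as $+\infty$, so both sides of the pointwise inequality vanish. The set $\{w = 0\}$ is also $Q$-null, since $Q(\{w = 0\}) = \int_{\{w=0\}} w \, d\Pi = 0$. Finiteness of $w$ everywhere is given. With these conventions, the completed-square step above is the whole content of the argument, and we do not anticipate any real obstacle.
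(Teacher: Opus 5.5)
Your proof is correct and is essentially the paper's argument: the paper also splits on $\{V(y) \ge V(x)\}$ and uses $2ab \le a^2 + b^2$, which is your completed square $(w(y)-w(x))^2 \ge 0$ multiplied through by $w(y)$. The only cosmetic difference is that the paper first rewrites the integrals against $\Pi$ via $Q(dy) = V(y)\,\Pi(dy)$, whereas you integrate against $Q$ directly and also handle the $Q$-null set $\{w = 0\}$ explicitly, which the paper leaves implicit.
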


\begin{proof}
For every $x \in \X$,
\begin{align*}
\P_{IMH} V(x)
&= \int_{\X} V(y) \wedge V(x) Q(dy)
+ V(x) \int_{\X} \left[ 1 - 1 \wedge \left( \frac{ V(x) }{ V(y) } \right) \right] Q(dy) 
\\
&= \int_{\X} V(y) \wedge V(x) V(y) \Pi(dy)
+ V(x) \int_{\X} \left[ 1 - 1 \wedge \left( \frac{ V(x) }{ V(y) } \right) \right] V(y) \Pi(dy). 
\end{align*}
Using $2 ab \le a^2 + b^2$ for $a, b \ge 0$,
\begin{align*}
\P_{IMH} V(x)
&= \int_{\X} V(y) \wedge V(x) V(y) \Pi(dy)
+ \int_{ \{ y : V(y) \ge V(x) \} } \left[ V(x) V(y) - V(x)^2 \right] \Pi(dy)
\\
&\le \int_{ \{ y : V(y) \ge V(x) \} } V(x) V(y) \Pi(dy)
+ \int_{ \{ y : V(y) < V(x) \} } V(y)^2 \Pi(dy)
\\
&\hspace{.4cm}+ \int_{ \{ y : V(y) \ge V(x) \} } \left[ V(y)^2 - V(x) V(y) \right] \Pi(dy)
\\
&\le \int_{\X} V(y)^2 \Pi(dy)
\\
&= \int_{\X} V(y) Q(dy).
\end{align*}
\end{proof}

On any sublevel sets of $V$, a local minorization \ref{eq:local_minorization} will require the global infimum bounded below:
\[
\inf_{x \in \X} (dQ/d\Pi)(x) > 0.
\] 
The importance weight must still be uniformly bounded corresponding to previous results \citep{mengersen:tweedie:1996}.
In this case, Proposition~\ref{proposition:imh_drift} does not provide any benefit to show convergence in standard total variation using Theorem~\ref{thm:main}.

Consider a concrete toy example with $\pi(x) = \exp(-x) I_{\{ x \ge 0 \} }$ and $\lambda \in (0, 1]$, then the proposal $q(x) = \lambda \exp(-\lambda x) I_{ \{ x \ge 0 \} }$.
Then $q(x)/\pi(x) = \lambda \exp[( 1 - \lambda )x] \ge \lambda$ for all $x \in [0, \infty)$.
However the drift condition requires
\[
\int_{[0, \infty)} \frac{q(x)}{\pi(x)} q(x) dx
= \lambda^2 \int_{[0, \infty)} \exp[(1 - 2\lambda) x] dx < \infty
\]
and also requires $\lambda \in (1/2, 1]$.
Importantly, when $\lambda \in (0, 1/2)$ this drift condition does not hold, but the global minorization condition holds for every choice of $\lambda$.
The uniform drift condition here provides a stronger uniform convergence over functions $f$ such that $|f| \le V$ that can be more useful in statistics applications.

\section{Qualitative results for general data augmentation Gibbs samplers} \label{example:general_data_augmentation}

We now investigate qualitative results of general two-variable Gibbs samplers constructed via data augmentation.
Let $S_p$ be the space of symmetric positive semidefinite matrices in $\R^{p \times p}$.
Let $\mu$ be a Borel probability measure supported on $\Omega$.
Let $F : \Omega \to S_p$ be Borel measurable and let $\kappa : \Omega \to \R^p$ be Borel measurable.
Assume a target measure $\Pi$ on the joint space $\R^p \times \Omega$ is defined by
\begin{align}
\Pi(d\beta, d\w)
\propto \exp\left( \beta^T \kappa(\w) -\frac{1}{2} \beta^T \left[ F(\w) + \Sigma_0^{-1} \right] \beta \right) \mu(d\w) d\beta
\label{eq:general_posterior_form}
\end{align}
where $\Sigma_0 \in \R^{p \times p}$ is symmetric and positive definite.
Here we are implicitly assuming $\Pi$ defines a valid probability measure with finite normalizing constant.
This formulation of the target measure is prescribing $\Pi(d\beta \mid \w)$ as Gaussian with a specific form.

Define a two-variable Gibbs sampler $(\w^{(t)}, \beta^{(t)})_{t \ge 0}$ with invariant measure \eqref{eq:general_posterior_form} described as follows.
Given $\beta^{(t)}$, sample
\begin{align}
&\w^{(t+1)} | \beta^{(t)} \sim \Pi(d\w^{(t+1)} \mid \beta^{(t)}).
\end{align}
Then given $\w^{(t+1)}$, sample
\begin{align}
&\beta^{(t+1)} | \w^{(t+1)} \sim N_p(m(\w^{(t+1)}, \Sigma(\w^{(t+1)}))
\label{eq:general_gibbs_sampler_form}
\end{align}
where
\begin{align*}
&\Sigma(\cdot) = \left( F(\cdot) + \Sigma_0^{-1} \right)^{-1},
&m(\cdot) = \Sigma(\cdot) \kappa(\cdot).
\end{align*}

Let $\P_G$ denote the marginal Markov transition kernel for the marginal Markov chain $(\beta^{(t)})_{t \ge 0}$.
The following result shows that qualitative hyper V-uniform ergodicity from Theorem~\ref{thm:main} can be inferred directly from the form of the target measure without any convergence analysis.

\begin{theorem} \label{thm:general_posterior_convergence}
Assume $\sup_{\w \in \Omega} \norm{ \Sigma(\w) \kappa(\w) } < \infty$.
Let
\[
Z(\beta) = \int_{\Omega} \exp\left( \beta^T \kappa(\w) -\frac{1}{2} \beta^T \left[ F(\w) + \Sigma_0^{-1} \right] \beta \right) \mu(d\w)
\]
and assume $\beta \mapsto Z(\beta)$ is finite and continuous.
Then the marginal Markov chain $(\beta^{(t)})_{t \ge 0}$ described above is hyper $V$-uniformly ergodic for $V(\beta) = \norm{\beta}^2$, that is, there is an $\alpha \in (0, 1)$ and $K > 0$ such that for all $\beta \in \R^p$ and all integers $t \ge 1$,
\begin{align*}
&\norm{P_G^{2t}(\beta, \cdot) -  \Pi(\cdot \times \Omega)}_{\mathrm{TV}}
\le \big(1 - \alpha \big)^t
\\
&\norm{P_G^{2t+1}(\beta, \cdot) -  \Pi(\cdot \times \Omega)}_{1 + V}
\le 2( 1 + K) \, \big(1 - \alpha \big)^t.
\end{align*}
\end{theorem}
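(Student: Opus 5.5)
The plan is to verify the two hypotheses of Theorem~\ref{thm:main} for $\P_G$ with $V(\beta)=\norm{\beta}^2$ and then read off the conclusion with $\alpha = \alpha_r(1-1/r)$ for a fixed choice such as $r=2$.

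\emph{Uniform drift.} Conditionally on $\w^{(t+1)}$, the Gaussian step gives $\E[\norm{\beta^{(t+1)}}^2 \mid \w] = \norm{m(\w)}^2 + \tr(\Sigma(\w))$. Since $F(\w)\in S_p$, we have $\Sigma(\w) = (F(\w)+\Sigma_0^{-1})^{-1} \preceq \Sigma_0$, so $\tr(\Sigma(\w)) \le \tr(\Sigma_0)$. Also $\norm{m(\w)} = \norm{\Sigma(\w)\kappa(\w)}$ is bounded by assumption. Hence
\[
\P_G V(\beta) \le \sup_{\w}\norm{\Sigma(\w)\kappa(\w)}^2 + \tr(\Sigma_0) =: K
\]
uniformly in $\beta$, exactly as in the proof of Theorem~\ref{thm:pg_convergence}.

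\emph{Local minorization.} This is the main step. From \eqref{eq:general_posterior_form}, the conditional of $\w$ given $\beta$ is
\[
\Pi(d\w\mid\beta) = Z(\beta)^{-1} \exp\!\big(\beta^T\kappa(\w) - \tfrac12 \beta^T[F(\w)+\Sigma_0^{-1}]\beta\big)\,\mu(d\w),
\]
and $Z(\beta)$ is finite and positive. On $C=\{\norm{\beta}^2\le rK\}$, which is compact, the continuity of $Z$ gives $\bar Z := \sup_C Z < \infty$.

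I would minorize the unnormalized density at a reference point, taken as $\beta=0$, where the exponent vanishes. Let $g_\beta(\w)$ denote the exponent. If $\kappa$ and $F$ were bounded, $g_\beta(\w)$ would be bounded below on $C\times\Omega$, but they need not be. Instead I use a truncation set. Pick $L$ so that $\mu(A_L) > 0$ for
\[
A_L = \{\w : \norm{\kappa(\w)}\le L,\ \norm{F(\w)}\le L\},
\]
which is possible since $\mu(A_L)\uparrow 1$. On $C\times A_L$ the exponent is bounded below by $-\sqrt{rK}\,L - \tfrac12 rK(L+\norm{\Sigma_0^{-1}})$. Therefore, for all $\beta \in C$,
\[
\Pi(d\w\mid\beta) \ge \bar Z^{-1} e^{-c_L}\,\mu(A_L)\,\mu_{A_L}(d\w),
\]
where $\mu_{A_L}$ is $\mu$ conditioned on $A_L$ and $c_L$ denotes the lower-bound constant above. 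Integrating against the Gaussian kernel $N_p(m(\w),\Sigma(\w))$ yields $\P_G(\beta,\cdot) \ge \alpha_r\,\nu(\cdot)$ on $C$, with $\nu = \int N_p(m(\w),\Sigma(\w))\,\mu_{A_L}(d\w)$ and $\alpha_r = \bar Z^{-1}e^{-c_L}\mu(A_L) \in (0,1]$. The bound $\alpha_r \le 1$ is automatic because both sides are probability measures.

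\emph{Conclusion.} Applying Theorem~\ref{thm:main} with $c(s)=1+s$, which is concave and nondecreasing, gives the total variation bound at even times and the $(1+V)$-weighted bound $2(1+K)(1-\alpha)^t$ at odd times, with $\alpha = \alpha_r(1-1/r)$. Existence of the invariant measure $\Pi(\cdot\times\Omega)$ is standard for the Gibbs marginal, and uniqueness follows from the global two-step minorization established inside the proof of Theorem~\ref{thm:main}.

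The main obstacle is the minorization step. The choice of the truncation set $A_L$ is the key trick that avoids needing boundedness of $\kappa$ or $F$. The continuity of $Z$ is used only to control the normalizing constant uniformly on the compact set $C$.
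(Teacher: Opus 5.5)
Your proposal is correct. The drift step is the same as the paper's, but the minorization step takes a different route.

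The paper works pointwise in $\w$. It sets $g_R(\w)=\inf_{\norm{\beta}^2\le R}\pi(\w\mid\beta)$ and uses continuity of $\beta\mapsto\pi(\w\mid\beta)$ (this is where continuity of $Z$ enters) together with compactness of the sublevel set. This gives that $g_R$ is Borel measurable and strictly positive, and the paper takes $M_R=\int_\Omega g_R\,d\mu$ as the minorization constant.

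You instead truncate to $A_L$, where $\kappa$ and $F$ are bounded, and obtain a lower bound on the exponent that is uniform over $C\times A_L$. This has several advantages. It needs only $\sup_C Z<\infty$, i.e.\ local boundedness of $Z$ rather than continuity. It avoids any measurability question for an infimum over uncountably many $\beta$. It makes positivity of the constant transparent through $\mu(A_L)>0$; by contrast, the paper comments only on finiteness of $M_R$, whereas positivity is what is needed. It also gives an explicit, if crude, constant $\bar Z^{-1}e^{-c_L}\mu(A_L)$.

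The paper's constant is implicit, but it is the best one obtainable by minorizing $\Pi(d\w\mid\beta)$ uniformly over the sublevel set. Since $g_{rK}\ge \bar Z^{-1}e^{-c_L}\mathbf{1}_{A_L}$, it dominates yours, $M_{rK}\ge \bar Z^{-1}e^{-c_L}\mu(A_L)$. So it can only improve the resulting rate, though for this qualitative statement that is immaterial.
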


\begin{proof}
By the definition of the target measure \eqref{eq:general_posterior_form}, define the unnormalized density
\begin{align*}
\tilde{\pi}(\beta, \w)
&= \exp\left( 
\beta^T \kappa(\w) 
- \frac{1}{2} \beta^T \left[ F(\w) + \Sigma_0^{-1} \right] \beta 
\right)
\\
&= \exp\left( 
\beta^T \kappa(\w) 
- \frac{1}{2} \beta^T \Sigma(\w)^{-1} \beta 
\right).
\end{align*}
Both $\Sigma(\w)$ and $\Sigma(\w)^{-1}$ are well-defined due to the positive definite assumption on $\Sigma_0$.
Therefore, we can define the Gibbs sampler $(\w^{(t)}, \beta^{(t)})_{t \ge 0}$ through the conditional updates $\w^{(t+1)} | \beta^{(t)} \sim \pi(\w^{(t+1)} \mid \beta^{(t)})$ and
\[
\beta^{(t+1)} | \w^{(t+1)} \sim N_p(m(\w^{(t+1)}), \Sigma(\w^{(t+1)})).
\]

We will show using Theorem~\ref{thm:main} that the marginal of the Gibbs sampler is uniformly ergodic.
We first prove the drift condition as in \eqref{eq:drift}.
Let $(\lambda_i(\Sigma(\w)))_{i = 1}^p, (\lambda_i(\Sigma_0))_{i = 1}^p$ denote the eigenvalues of $\Sigma(\w)$ and $\Sigma_0$ respectively. 
Then $\lambda_i(\Sigma(\w)) \le \lambda_i(\Sigma_0)$, and we have
\begin{align*}
\E\left[  \norm{ \beta^{(t+1)} }^2 \mid \w^{(t+1)} = \w \right]
\le \norm{m(\w)}^2 + \tr(\Sigma(\w))
\le \norm{m(\w)}^2 + \tr(\Sigma_0).
\end{align*}

Next, we prove the local minorization condition \eqref{eq:local_minorization}.
The sublevel sets of $V$ are compact since they are bounded in Euclidean space.
For $\beta \in \R^p$, let $\pi(\cdot \mid \beta)$ denote the conditional density with respect to $\mu$.
Since $Z(\beta)$ is finite and continuous by assumption, then $\pi(\w \mid \beta)$ is positive and continuous for every $\w \in \Omega$.
For any $R > 0$, define
\[
g_R(\cdot) = \inf_{\norm{\beta}^2 \le R} \pi(\cdot \mid \beta ).
\]
By continuity and compactness of the sublevel set, $g_R$ is Borel measurable and positive.
For all Borel measurable $B \subseteq \R^p$ and $A \subseteq \Omega$
\[
\int_{B} \int_{A} \Pi(d\beta' \mid \w') \Pi(d\w' \mid \beta)
\ge \int_{B} \int_{A} \Pi(d\beta' \mid \w') g_R(\w') \mu(d\w').
\]
Define $M_R = \int_{\Omega} g_R(\w') \mu(d\w')$ and it is clear this is finite, and define the probability measure $\nu_R$ by
\[
\nu_R(B)
= \frac{1}{M_R} \int_{B} \int_{\Omega} \Pi(d\beta' \mid \w') g_R(\w')  \mu(d\w').
\]
Therefore, we have shown a local minorization with
\[
\inf_{\norm{\beta}^2 \le R} \int_{B} \int_{\Omega} \Pi(d\beta' \mid \w') \Pi(d\w' \mid \beta)
\ge M_R \nu_R(B).
\]
\end{proof}

Theorem~\ref{thm:general_posterior_convergence} gives a qualitative V-uniform ergodicity result for many posteriors in statistics.
A sufficient and readily checked condition is that $\sup_{\w \in \Omega} \norm{\kappa(\w)} < \infty$.
In this case, $\sup_{\w \in \Omega} \norm{ \Sigma(\w) \kappa(\w) } \le \norm{ \Sigma_0 } \norm{ \kappa(\w) } < \infty$.
Since for any $\e > 0$, we can upper bound $2 \beta^T \sup_{\w \in \Omega} \kappa(\w) \le \e \norm{\beta}^2 + \e^{-1} \sup_{\w \in \Omega} \norm{ \kappa(\w) }^2$, then by dominated convergence, $Z(\cdot)$ is continuous.
The following are some concrete examples.

\begin{example}
(P\'olya-Gamma Gibbs sampler)
Consider the P\'olya--Gamma posterior for Bayesian logistic regression with Gaussian prior in Example~\ref{example:pg_sampler}. 
The conditional posterior has Lebesgue density of the form
\begin{align*}
\pi(\beta \mid \w, X, y )
\propto \exp\left( \beta^T \kappa -\frac{1}{2} \beta^T \left[ \Sigma_0^{-1}  + X^T \Omega X \right] \beta \right) 
\end{align*}
where $\kappa = X^T ( y - \tfrac{1}{2}\mathbf{1}_n ) + \Sigma_0^{-1} m_0$ and $\Omega = \text{diag}(\w_1, \ldots, \w_n)$.
Since $\kappa$ is uniformly bounded, then Theorem~\ref{thm:general_posterior_convergence} gives a qualitative hyper V-uniform ergodicity but without an explicit convergence rate.
\end{example}

\begin{example}
(Kolmogorov-Gamma Gibbs sampler)
Consider the posterior from Kolmogorov--Gamma data augmentation in Example~\ref{example:kg_sampler}.
This is defined for $(\w_i, \lambda_i)_{i = 1}^n$ where $\lambda_i \in \{1, \ldots, L \}$ with $L < \infty$ is a mixture distribution and $\w |\lambda_i$ have distribution $KG(\lambda_i, x_i^T \beta)$. 
This results in an augmented conditional posterior Lebesgue density of the form
\begin{align*}
\pi( \beta \mid \w, \lambda, X, y )
\propto \exp\left( \beta^T \kappa(\lambda) -\frac{1}{2} \beta^T \left[ \Sigma_0^{-1} + X^T \Omega X \right] \beta \right) 
\end{align*}
where $\kappa(\lambda) = X^T ( \lambda \cdot (y - \tfrac{1}{2}\mathbf{1}_n) ) + \Sigma_0^{-1} m_0$ and $\Omega = \text{diag}(\w_1, \ldots, \w_n)$.
Similar to the P\'olya-Gamma sampler, the form of the posterior and Theorem~\ref{thm:general_posterior_convergence} directly gives a qualitative hyper V-uniform ergodicity without the explicit convergence rate.
\end{example}

\begin{example}
(Qualitative convergence for Bayesian linear regression)
Uniform drift conditions such as \eqref{eq:drift} have been shown in Bayesian regression with independent priors \citep[Lemma 4.1]{Ekvall:Jones:2021}.
We prescribe this model in a general way covering a wide range of applications in statistics.
For $v \in (m-1, \infty)$ and positive definite symmetric matrix $S \in \R^{m \times m}$, we consider an inverse-Wishart prior $\Sigma \sim \text{Wis}_m^{-1}(v, S)$ with density 
\begin{align*}
g_{v, S}(\Sigma) = \frac{\det(S)^{\frac{v}{2}}}{2^{vm/2} \Gamma_m(v/2)} \det(\Sigma)^{-\frac{v + m + 1}{2}} \exp(-\frac{1}{2} \tr(S \Sigma^{-1}))
\end{align*}
where $\Gamma_m$ is the multivariate Gamma function.
For $m_0 \in R^{mp}$ and a positive definite matrix $C \in \R^{pm \times pm}$, we consider $\beta = \text{vec}(\B) \sim N_{pm}(m_0, C)$.
Let $Y \in \R^{n \times m}$, $y = \text{vec}(Y)$, and $x_1, \ldots, x_n \in \R^p$ and let $X \in \R^{n \times p}$ with rows $x_i^T$.
Consider the linear regression model
\begin{align*}
&y_i \mid X , \beta \sim N_m( \B^T x_i, \Sigma),
&i = 1, \ldots, n.
\end{align*}
The posterior density has the form
\begin{align*}
&\pi(\Sigma, \beta \mid X, Y)
\\
&\propto \exp\left( -\frac{1}{2} \tr\left\{ \Sigma^{-1} \left[ S + (Y - X \B)^T  (Y - X \B) \right] \right\} 
-\frac{1}{2} (\beta - m_0)^T C^{-1} (\beta - m_0) \right)
\det(\Sigma)^{-\frac{v + n + m + 1}{2}}
\end{align*}
Writing the posterior density in terms of the vectorized $\beta$
\begin{align*}
&\pi(\Sigma, \beta \mid X, Y)
\\
&\propto \exp\left( -\frac{1}{2} (y - (I_m \otimes X) \beta )^T (\Sigma^{-1} \otimes I_n) (y - (I_m \otimes X) \beta ) -\frac{1}{2} (\beta - m_0)^T C^{-1} (\beta - m_0) \right)
\\
&\hspace{.4cm} \times \exp\left( -\frac{1}{2} \tr( \Sigma^{-1} S ) \right) 
\det(\Sigma)^{-\frac{v + n + m + 1}{2}}.
\end{align*}
The conditional posterior has the form as in Theorem~\ref{thm:general_posterior_convergence} with
\begin{align*}
\pi(\beta \mid \Sigma, X, Y)
\propto \exp\left( \beta^T \kappa_{\Sigma}
-\frac{1}{2} \beta^T C_{\Sigma}^{-1} \beta \right)
\end{align*}
where
\begin{align*}
&C_{\Sigma} = \left( C^{-1} + \Sigma^{-1} \otimes X^T X \right)^{-1},
&\kappa_{\Sigma} = \left( C^{-1} m_0 + ( \Sigma^{-1} \otimes X^T ) y \right).
\end{align*}
The two variable Gibbs sampler alternates updates for integers $t \ge 0$
\begin{align*}
&\Sigma^{(t + 1)} \mid \beta^{(t)} \sim \text{Wis}_m^{-1}(v + n, S + (Y - X \B^{(t)})^T (Y - X \B^{(t)}) ),
\\
&\beta^{(t + 1)} \mid \Sigma^{(t + 1)} \sim N_{mp}\left( C_{\Sigma^{(t+1)}} \kappa_{\Sigma^{(t+1)}}, C_{\Sigma^{(t+1)}} \right).
\end{align*}
The normalizing constant of the conditional inverse-Wishart density is known explicitly and is continuous in $\beta$.
If $X^T X$ is invertible, then define the maximum likelihood estimate
\[
\hat{\beta} = (I_m \otimes (X^T X)^{-1}) (I_m \otimes X^T) y.
\]
We then have the upper bound by adding and subtracting $C^{-1} \hat{\beta}$
\begin{align*}
\norm{ C_{\Sigma} \kappa_{\Sigma} }
&= \norm{ C_{\Sigma } C^{-1} ( m_0 - \hat{\beta} ) + \hat{\beta} }
\\
&\le \norm{ C_{ \Sigma } } \norm{ C^{-1} } \norm{ m_0 - \hat{\beta} } + \norm{\hat{\beta} }
\\
&\le \norm{ C } \norm{ C^{-1} } \norm{ m_0 - \hat{\beta} } + \norm{\hat{\beta} }.
\end{align*}
Combined with the form of the posterior, Theorem~\ref{thm:general_posterior_convergence} gives a qualitative hyper V-uniform ergodicity.
Using previous techniques, an explicit convergence rate can be shown through standard convergence analysis \citep[Lemma 4.1]{Ekvall:Jones:2021}.
\end{example}

\section{Summary and future directions} \label{section:conclusion}

We introduced a uniform drift condition \eqref{eq:drift} together with a local minorization condition \eqref{eq:local_minorization} and showed that they imply a stronger weighted form of uniform ergodicity, which we call hyper-$V$ uniform ergodicity. 
%Although the underlying proof techniques in Theorem~\ref{thm:main} are classical, the resulting framework has important practical advantages. 
We showed a minimax optimal convergence bound and convergence rate in Theorem~\ref{thm:convergence_sharp} and the resulting framework has important practical advantages. 
In many statistical applications, establishing a global minorization condition is substantially more difficult than verifying a uniform drift condition and a local minorization condition. This was illustrated by the P\'olya--Gamma and Kolmogorov--Gamma Gibbs samplers, where the required assumptions can be verified through relatively simple drift and density comparison arguments. 

The advantages of the framework are not limited to simplifying the verification of uniform ergodicity. In the independence Metropolis--Hastings example, the assumptions are not necessarily easier to verify than existing approaches, but they nevertheless yield the stronger conclusion of hyper-$V$ uniform ergodicity rather than merely uniform ergodicity. This demonstrates that the framework can provide additional information even when it does not substantially simplify the analysis. 

A natural direction for future research is to replace the local minorization condition with a local Wasserstein contraction. Such an approach could potentially lead to sharper convergence rates while preserving the weighted convergence guarantees developed here. We also have not fully explored the consequences of hyper-$V$ uniform ergodicity in Wasserstein distance, which may warrant further investigation. 

For two-variable Gibbs samplers, we showed that qualitative hyper-$V$ uniform ergodicity can often be inferred directly from the structure of the invariant distribution. This is particularly noteworthy because convergence can be established without any direct analysis of the associated Markov chain. By contrast, it is not generally clear whether a global minorization condition can be deduced from the form of the invariant distribution alone. More broadly, the ideas developed in Section~\ref{example:general_data_augmentation} suggest that the structure of the invariant measure itself may provide a useful route to establishing convergence properties for broader classes of data-augmentation algorithms. Although the resulting conclusions are qualitative, such methods have the potential to become practical tools for applied statisticians seeking to establish uniform ergodicity without undertaking a detailed convergence analysis.

\section*{Statement of AI use} 

GPT 5.6 Sol was used to assist in the development of proof techniques appearing in Section 3, and the resulting proofs were independently verified by the authors. GPT 5.6 Sol was also used to assist with the Python code used for the simulation studies. All other mathematical results, proofs, theoretical developments, and manuscript content were developed by the authors without the use of AI-generated proof assistance. 

%\begin{acks}[Acknowledgments]
%\end{acks}

\begin{funding}
Khare's work on this paper was partially supported by NSF-DMS-2410677 and NSF-DMS-2506059. 
\end{funding}

\bibliographystyle{imsart-number}
\bibliography{references}

\end{document}